\newcommand{\Id}{\mathrm{Id}}
\DeclareMathOperator*{\argmin}{arg\,min}
\DeclareMathOperator{\vc}{vec}
\newcommand{\prox}{\mathrm{prox}}
\newcommand{\norm}[1]{\left\lVert#1\right\rVert}
\let\originalleft\left
\let\originalright\right
\renewcommand{\left}{\mathopen{}\mathclose\bgroup\originalleft}
\renewcommand{\right}{\aftergroup\egroup\originalright}
\newcolumntype{L}[1]{>{\raggedright\let\newline\\\arraybackslash\hspace{0pt}}m{#1}}
\newcolumntype{C}[1]{>{\centering\let\newline\\\arraybackslash\hspace{0pt}}m{#1}}
\newcolumntype{R}[1]{>{\raggedleft\let\newline\\\arraybackslash\hspace{0pt}}m{#1}}
\newcommand{\vast}{\bBigg@{4}}
\def\*#1{\bm{\mathbf{#1}}}
\newcommand{\set}[2]{\left\{ #1\ \left| \ #2 \right. \right\}}
\newcommand{\eqdef}{:=}
\newcommand{\proofpart}[2]{%
  \par
  \addvspace{\medskipamount}%
  \noindent\emph{Part #1: #2}\par\nobreak
  \addvspace{\smallskipamount}%
  \@afterheading
}
\DeclareMathOperator{\cl}{cl}
\DeclareMathOperator{\conv}{conv}
\newcommand{\newcontent}[1]{#1}
\begin{document}

\title{Efficient Semidefinite Programming \\ with approximate ADMM}

\author{Nikitas Rontsis \and Paul Goulart \and \\ Yuji Nakatsukasa}

\institute{
    Nikitas Rontsis\textsuperscript{1}, Corresponding Author: nrontsis@gmail.com \\
    Paul Goulart\textsuperscript{1}: paul.goulart@eng.ox.ac.uk \\
    Yuji Nakatsukasa\textsuperscript{2}: yuji.nakatsukasa@maths.ox.ac.uk
    \\
    \textsuperscript{1} Department of Engineering Science, University of Oxford, Oxford, UK.\\
    \textsuperscript{2} Mathematical Institute, University of Oxford, Oxford, UK.
}

\maketitle

\begin{abstract}
    Tenfold improvements in computation speed can be brought to the alternating direction method of multipliers (ADMM) for Semidefinite Programming with virtually no decrease in robustness and provable convergence simply by projecting approximately to the Semidefinite cone. Instead of computing the projections via ``exact'' eigendecompositions that scale cubically with the matrix size and cannot be warm-started, we suggest using state-of-the-art factorization-free, approximate eigensolvers\newcontent{,} thus achieving almost quadratic scaling and the crucial ability of warm-starting. Using a recent result from \cite{Goulart2020}, we are able to circumvent the numerical instability of the eigendecomposition and thus maintain tight control on the projection accuracy.  This in turn guarantees convergence, either to a solution or a certificate of infeasibility, of the ADMM algorithm. To achieve this, we extend recent results from \cite{osqpinfeasibility} to prove that reliable infeasibility detection can be performed with ADMM even in the presence of approximation errors. In all of the considered problems of SDPLIB that ``exact'' ADMM can solve in a few thousand iterations, our approach brings a significant, up to 20x, speedup without a noticeable increase on ADMM's iterations.
\end{abstract}
\keywords{Semidefinite Programming \and Iterative Eigensolvers \and ADMM}
\subclass{90C22 \and  65F15}

\section{Introduction}
Semidefinite Programming is of central importance in many scientific fields. Areas as diverse as kernel-based learning \cite{Lanckriet2004}, dimensionality reduction \cite{Aspremont2005} analysis and synthesis of state feedback policies of linear dynamical systems \cite{Boyd1994}, sum of squares programming \cite{Prajna2002}, optimal power flow problems \cite{Lavaei2012} and fluid mechanics \cite{Goulart2012} rely on Semidefinite Programming as a crucial enabling technology.

The wide adoption of Semidefinite Programming was facilitated by reliable algorithms that can solve semidefinite problems with polynomial worst-case complexity \cite{Boyd1994}. For small to medium sized problems, it is widely accepted that primal-dual Interior Point methods are efficient and robust and are therefore often the method of choice. Several open-source solvers, like \texttt{SDPT3} \cite{SDPT3} and \texttt{SDPA} \cite{SDPA}, as well as the commercial solver \texttt{MOSEK} \cite{MosekPy} exist that follow this approach. However, the limitations of interior point methods become evident in large-scale problems, since each iteration requires factorizations of large Hessian matrices. First-order methods avoid this bottleneck and thereby scale better to large problems, with the ability to provide modest-accuracy solutions for many large scale problems of practical interest.

We will focus on the Alternating Directions Method of Multipliers (ADMM), a popular first-order algorithm that has been the method of choice for several popular optimization solvers both for Semidefinite Programming \cite{ODonoghue2016}, \cite{Zheng2017}, \cite{Garstka2019} and other types of convex optimization problems such as Quadratic Programming (QP) \cite{osqp}. Following an initial factorization of an $m \times m$ matrix, every iteration of ADMM entails the solution of a linear system via forward/backward substitution and a projection to the Semidefinite Cone.  For SDPs, this projection operation \newcontent{typically} takes the majority of the solution time, sometimes 90\% or more. Thus, reducing the per-iteration time of ADMM is directly linked to computing conic projections in a time-efficient manner.

The projection of a symmetric matrix $n \times n$ matrix $A$ to the Semidefinite Cone is defined as $$\newcontent{\Pi_{\mathbb{S}_+}(A)} \eqdef \argmin_{X} \norm{A - X}_F,$$ and can be computed in ``closed form'' as a function of the eigendecomposition of $X$. Indeed, assuming
\newcontent{
\begin{equation} \label{admm:eqn:eigendecomposition}
    \begin{bmatrix}V_+ & V_-\end{bmatrix}
    \begin{bmatrix}\Lambda_+ & \\ & \Lambda_-\end{bmatrix}
    \begin{bmatrix}V_+ & V_-\end{bmatrix}^\newcontent{T} \eqdef X
\end{equation}
}
where $V_+$ (respectively $V_-$) is an orthonormal matrix containing the positive (\newcontent{nonpositive}) eigenvectors, and $\Lambda_+$ $(\Lambda_-)$ is a diagonal matrix that contains the \newcontent{respective} positive (nonnegative) eigenvalues of $A$, then
\begin{equation} \label{admm:eqn:projection}
\Pi_{\mathbb{S}_+}(A) = V_+ \Lambda_+ V_+^T = A - V_- \Lambda_- V_-^T.
\end{equation}
The computation of $\Pi_{\mathbb{S}_+}$ therefore entails the (partial) eigendecomposition of $A$ followed by a scaled matrix-matrix product.

The majority of optimization solvers, e.g. \texttt{SCS} \cite{ODonoghue2016} and \texttt{COSMO.jl} \cite{Garstka2019}, calculate $\Pi_{\mathbb{S}_+}$ by computing the full eigendecomposition using \texttt{LAPACK}'s \texttt{syevr} routine\footnote{Detailed in https://software.intel.com/mkl-developer-reference-c-syevr}. There are two important limitations associated with computing full eigendecompositions. Namely, eigendecomposition has cubic complexity with respect to the matrix size $n$ \cite[\S8]{Golub2013}, and it cannot be warm started. This has prompted research on methods for the approximate computation of a few eigenpairs in an iterative fashion \cite{Saad2011}, \cite{Bai2000}, \cite{Parlett1998}, and the associated development of relevant software tools such as the widely used \texttt{ARPACK} \cite{Lehoucq1998},
and the more recent
\texttt{BLOPEX} \cite{Knyazev2001} and \texttt{PRIMME} \cite{Stathopoulos2010}. The reader can find surveys of relevant software in \cite{Hernandez2009} and \cite[\S 2]{Stathopoulos2010}

However, the use of iterative eigensolvers in the Semidefinite optimization community has been very limited. To the best of our knowledge, \newcontent{ the use of approximate eigensolvers has been limited in widely-available ADMM implementations. In a related work, \cite{Li2020} considered the use of polynomial subspace extraction to avoid expensive eigenvalue decompositions required by first order-methods (including ADMM) and showed improved performance in problems of low-rank structure, while still maintaining convergence guarantees.} In the wider area of first-order methods, \cite[\S 3.1]{Wen2010} considered \texttt{ARPACK} but disregarded it on the basis that it does not allow efficient warm starting, suggesting that it should only be used when the problem is known a priori to have low rank. .
Wen's suggestion of using \texttt{ARPACK} for SDPs whose solution are expected to be low rank has been demonstrated recently by \cite{Souto2018}. At every iteration, \cite{Souto2018} uses \texttt{ARPACK} to compute the $r$ largest eigenvalues/vectors and then  uses the approximate projection $\tilde \Pi(A) = \sum_{i=1}^r \max(\lambda_i, 0) v_i v_i^T$. The projection error can then be bounded by $$\norm{\Pi(A) - \tilde \Pi(A)}_F^2 = \norm{\sum_{i = r+1}^n \max(\lambda_i, 0) v_i v_i^T}_F^2 \leq (n-r)\max(\lambda_r, 0)^2.$$ The parameter $r$ is chosen such that it decreases with increasing iteration count so that the projection errors are summable.
The summability of the projection errors is important, as it has been shown to ensure convergence of averaged non-expansive operators \cite[Proposition 5.34]{Bauschke2017} and for ADMM in particular \cite[Theorem 8]{Eckstein1992}.

However, the analysis of \cite{Souto2018} depends on the assumption that the iterative eigensolver will indeed compute the $r$ largest eigenpairs ``exactly''. This is both practically and theoretically problematic; the computation of eigenvectors is numerically unstable since it depends inverse-proportionally on the spectral gap (defined as the distance between the corresponding eigenvalue and its nearest neighboring eigenvalue; refer to \S\ref{admm:subsec:termination} for a concise definition), and therefore no useful bounds can be given when repeated eigenvalues exist.

In contrast, our approach relies on a novel bound that characterizes the projection accuracy independently of the spectral gaps, depending only on the residual norms. The derived bounds do not require that the eigenpairs have been computed ``exactly'', but hold for \emph{any} set of approximate eigenpairs obtained via the Rayleigh-Ritz process. This allows us to compute the eigenpairs with a relatively loose tolerance while still retaining convergence guarantees. Furthermore, unlike \cite{Souto2018}, our approach has the ability of warm-starting of the eigensolver, which typically results in improve computational efficiency.

On the theoretical side, we extend recent results regarding the detection of primal or dual infeasibility. It is well known that if an SDP problem is infeasible then the iterates of ADMM will diverge \cite{Eckstein1992}. This is true even when the iterates of ADMM are computed approximately with summable approximation errors. Hence, infeasibility can be detected in principle by stopping the ADMM algorithm when the iterates exceed a certain bound.  However, this is unreliable both in practice, because it depends on the choice of the bound, and in theory, because it does not provide certificates of infeasibility \cite{Boyd2004}. Recently, \cite{osqpinfeasibility} has shown that the successive differences of ADMM's iterates, which always converge regardless of feasibility, can be used to reliably detect infeasibility and construct infeasibility certificates. This approach has been used successfully in the optimization solver \texttt{OSQP} \cite{osqp}. We extend Banjac's results to show that they hold even when ADMM's iterates are computed approximately, under the assumption that the approximation errors are summable.

\emph{Notation used}: Let $\mathcal{H}$ denote a real Hilbert space equipped with an inner-product induced norm $\norm{\cdot} = \newcontent{\sqrt{\langle \cdot, \cdot \rangle}}$ and $\text{Cont}(\mathcal{D})$ the set of nonexpansive operators in $\mathcal{D} \subseteq \mathcal{H}$. $\cl{\mathcal{D}}$ denotes the closure of $\mathcal{D}$, $\conv{\mathcal{D}}$ the convex hull of $\mathcal{D}$, and $\mathcal{R}(T)$ the range of $T$. $\Id$ denotes the identity operator on $\mathcal{H}$ while $I$ denotes an identity matrix of appropriate dimensions. For any scalar, nonnegative $\epsilon$, let $x \approx_\epsilon y$ denote the following relation between $x$ and $y$: $\norm{x - y} \leq \epsilon$. $\mathbb{S}_+$ denotes the set of positive semidefinite matrices with a dimension that will be obvious from the context. \newcontent{Finally, define $\Pi_{\mathcal{C}}$ the projection, $\mathcal{C}^\infty$ the recession cone, and $S_\mathcal{C}$ the support function associated with a set $\mathcal{C}$}.

\section{Approximate ADMM} \label{admm:sec:algorithm}
Although the focus of this paper is on Semidefinite Programming, our analysis holds for more general convex optimization problems that allow for combinations of semidefinite Problems, Linear Programs (LPs), Quadratic Programs (QPs), Second Order Cone Programs (SOCPs) among others\footnote{\newcontent{Note that any problem of the form \eqref{admm:eqn:main_problem} can be converted to an SDP by noting that the positive orthant and the second order cone can be expressed as a semidefinite cone, and by considering the epigraph form of \eqref{admm:eqn:main_problem} \cite[\S4.1.3]{Boyd2004}.}}. In particular, the problem form we consider is defined as
\begin{equation}\label{admm:eqn:main_problem} \tag{$\mathcal{P}$}
  \begin{array}{ll}
    \mbox{minimize}   & \frac{1}{2} x^T P x + q^T x \\
    \mbox{subject to} & A x = z \\
                      & z \in \mathcal{C},
  \end{array}
\end{equation}
where $x \in \newcontent{\mathbb{R}^n}$ and $z \in \newcontent{\mathbb{R}^m}$ are the decision variables, $P \in \newcontent{\mathbb{S}^n_{+}}$, $q \in \newcontent{\mathbb{R}^n}$, $A \in \mathbb{R}^{m \times n}$ and $\mathcal{C}$ is \newcontent{a translated composition of the positive orthant, second order and/or semidefinite cones.}

We suggest solving \eqref{admm:eqn:main_problem}, i.e. finding a solution $(\bar x, \bar z, \bar y)$ where $\bar y$ is a Lagrange multiplier for the equality constraint of \eqref{admm:eqn:main_problem}, with the approximate version of ADMM described in Algorithm \ref{admm:alg:admm_approximate}.
\begin{algorithm}
  \textbf{given} initial values $x^0, y^0, z^0$, parameters $\rho > 0, \sigma > 0, \alpha \in (0, 2)$, the summable sequences $(\mu^k)_{k \in \mathbb{N}}, (\nu^k)_{k \in \mathbb{N}}$\, and $x \approx_\epsilon y$ denoting that the vectors $x$, $y$ satisfy $\norm{x - y} \leq \epsilon$;
  \For{$k = 0, \dots$ until convergence}{
    $
    \begin{bmatrix}
      \tilde x^{k+1} \\
      \tilde z^{k + 1}
    \end{bmatrix} \approx_{\mu^k}
    \begin{bmatrix}
      P + \sigma I & \rho A^T \\
      \rho A & -\rho I
    \end{bmatrix}
    \bigg\backslash
    \left(
      \begin{bmatrix}
        \sigma I & \rho A^T \\
        0 & 0
      \end{bmatrix}
      \begin{bmatrix}
        x^k \\
        z^k - y^k/\rho
      \end{bmatrix}
      -
      \begin{bmatrix}
        q \\
        0
      \end{bmatrix}
    \right)$\;
    $x^{k + 1} = \alpha \tilde x^{k + 1} + (1 - \alpha)x^k$ \;
    $z^{k + 1} \approx_{\nu^k} \Pi_\mathcal{C}(\alpha \tilde z^{k + 1} + (1 - \alpha)z^k + y^k/\rho)$\;
    $y^{k+1} = y^{k} + \rho(\alpha \tilde x^{k+1} + (1 - \alpha)z^k - z^{k + 1})$
  }
  \caption{Solving \eqref{admm:eqn:main_problem} with approximate ADMM}
  \label{admm:alg:admm_approximate}
\end{algorithm}
As is \newcontent{common} in the case in ADMM methods, our Algorithm consists of repeated solutions of linear systems (line \newcontent{2}) and projections to $\mathcal{C}$ (line \newcontent{4}). These steps are the primary drivers of efficiency of ADMM and are typically computed to machine precision via matrix factorizations. Indeed, Algorithm \ref{admm:alg:admm_approximate} was first introduced by \cite{osqp} \newcontent{and \cite{osqpinfeasibility}} in the absence of approximation errors. However, ``exact'' computations can be prohibitively expensive for large problems (and indeed impossible in finite-precision arithmetic), and the practitioner may have to rely on approximate methods for their computation.
For example, \cite[\S 4.3]{Boyd2011} suggests using the Conjugate Gradient method for approximately solving the linear systems embedded in ADMM. In Section \ref{admm:chapter:krylov}, we suggest specific methods for the approximation computation of ADMM steps with a focus in the operation of line \newcontent{4}. Before moving into particular methods, we  first discuss the convergence properties of Algorithm~\ref{admm:alg:admm_approximate}.

\newcontent{Our} analysis explicitly accounts for approximation errors and provides convergence guarantees, either to solutions or certificates of infeasibility, in their presence. In general
when ADMM's steps are computed approximately, ADMM might lose its convergence properties. Indeed, when the approximation errors are not controlled appropriately, the Fej\'er monotonicity \cite{Bauschke2017} of the iterates and any convergence rates of ADMM can be lost. In the worst case, the iterates could diverge. However, the following Theorem, which constitutes the main theoretical result of this paper, shows that Algorithm \ref{admm:alg:admm_approximate} converges either to a solution or to a certificate of infeasibility of \eqref{admm:eqn:main_problem} due to the requirement that the approximation errors are summable across the Algorithm's iterations.
\begin{theorem} \label{admm:thm:alg_convergence}
  Consider the iterates $x^k, z^k$, and $y^k$ of Algorithm \ref{admm:alg:admm_approximate}. If a KKT point exists for \eqref{admm:eqn:main_problem}, then \newcontent{$(x^k, z^k, y^k)$} converges to a KKT point, i.e. a solution of \eqref{admm:eqn:main_problem}\newcontent{, when $k \mapsto \infty$}.
  Otherwise, the successive differences
  \begin{equation*}
    \delta x \eqdef \lim_{k \to \infty} x^{k + 1} - x^k, \quad \text{and} \quad
    \delta y \eqdef \lim_{k \to \infty} y^{k + 1} - y^k.
  \end{equation*}
  still converge and can be used to detect infeasibility as follows:
  \begin{enumerate}[(i)]
    \item If $\delta y \neq 0$ then \eqref{admm:eqn:main_problem} is primal infeasible and $\delta y$ is a certificate of primal infeasibility \cite[Proposition 3.1]{osqpinfeasibility} in that it satisfies
    \begin{equation} \label{admm:eqn:primal_infeasibility}
      A^T \delta y = 0 \quad \text{and} \quad S_\mathcal{C}(\delta y) < 0.
    \end{equation}
    \item If $\delta x \neq 0$ then \eqref{admm:eqn:main_problem} is dual infeasible and $\delta x$ is a certificate of dual infeasibility \cite[Proposition 3.1]{osqpinfeasibility} in that it satisfies
    \begin{equation} \label{admm:eqn:dual_infeasibility}
      P\delta x = 0, \quad A \delta x \in \mathcal{C}^{\infty}, \quad \text{and} \quad q^T \delta x < 0.
    \end{equation}
    \item If both $\delta x \neq 0$ and $\delta y \neq 0$ then \eqref{admm:eqn:main_problem} is both primal and dual infeasible and \newcontent{$(\delta x, \delta y)$} are certificates of primal and dual infeasibility as above.
  \end{enumerate}
\end{theorem}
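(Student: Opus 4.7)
The plan is to reduce Algorithm~\ref{admm:alg:admm_approximate} to an inexact Krasnoselskii--Mann (KM) iteration of an averaged nonexpansive operator and then invoke, on the one hand, known convergence results for inexact KM iterations with summable errors, and on the other hand, an extension of Banjac's analysis of successive differences. Concretely, I would first show that the exact version of the algorithm (i.e.\ with $\mu^k = \nu^k = 0$) coincides with the Douglas--Rachford/ADMM operator applied to a suitable splitting of \eqref{admm:eqn:main_problem}, as in \citep{Stellato2017}. Writing the ADMM state as $\xi^k = (x^k,z^k,y^k)$ (or in the condensed variable used by Banjac), the exact recursion takes the form $\xi^{k+1} = T(\xi^k)$ for some $\alpha/2$-averaged nonexpansive $T$, whose fixed points are precisely the KKT points of \eqref{admm:eqn:main_problem}. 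The approximate algorithm then takes the form $\xi^{k+1} = T(\xi^k) + e^k$, where $e^k$ collects the linear-solve error (bounded by a constant times $\mu^k$, using nonexpansiveness of the affine inverse) and the projection error (bounded by $\nu^k$). Since $\mu^k,\nu^k$ are summable by hypothesis, so is $\|e^k\|$.

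For the feasible case, a KKT point supplies a fixed point of $T$, so Proposition~5.34 of \citep{Bauschke2017} (equivalently \citep[Theorem~8]{Eckstein1992}) applies directly to the inexact KM iteration, yielding $\xi^k \to \xi^\star$ for some fixed point $\xi^\star$. Standard extraction then identifies the components of $\xi^\star$ with a triple $(\bar x,\bar z,\bar y)$ satisfying the KKT conditions of \eqref{admm:eqn:main_problem}.

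For the infeasible case, the workhorse is the following fact about averaged nonexpansive operators: for any such $T$ on a Hilbert space, the sequence $T^{k+1}\xi^0 - T^k\xi^0$ converges strongly to $-v$, where $v$ is the minimum-norm element of $\overline{\mathrm{ran}}(\mathrm{Id}-T)$; this is the fact underlying \citep[Proposition~3.1]{Banjac2017}. To extend this to the approximate iterates, I would introduce the ``shadow'' sequence $\hat\xi^{k+1} = T(\hat\xi^k)$ with $\hat\xi^0 = \xi^0$ and use nonexpansiveness of $T$ together with summability of $e^k$ to bound $\|\xi^k - \hat\xi^k\| \leq \sum_{j<k}\|e^j\|$, which is uniformly bounded. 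A telescoping argument then shows $\xi^{k+1} - \xi^k - (\hat\xi^{k+1} - \hat\xi^k) \to 0$, so the successive differences of the approximate and exact iterates share the same limit $-v$. The projection of $-v$ onto the $x$- and $y$-components yields $\delta x$ and $\delta y$. The infeasibility certificates \eqref{admm:eqn:primal_infeasibility}, \eqref{admm:eqn:dual_infeasibility} then follow by replaying Banjac's identification of $v$ with a normal vector to the separating hyperplane, using the support function $S_\mathcal{C}$ and the recession cone $\mathcal{C}^\infty$ of the (compact $\times$ translated cone) set $\mathcal{C}$; the summable errors do not disturb these characterizations because they vanish in the limit of the successive differences.

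The main obstacle is the infeasible case: essentially all off-the-shelf inexact-KM theorems assume the existence of a fixed point, so the asymptotic regularity statement $T(\xi^k) - \xi^k \to -v$ under summable perturbations is not immediately available and must be established by hand. The key technical step is showing that the approximate and shadow successive differences are asymptotically equal, for which the right tool is to combine nonexpansiveness of $T$ with a Kronecker-type lemma applied to the summable sequence $\|e^k\|$. Once this regularity is secured, the algebraic identification of $\delta x,\delta y$ with infeasibility certificates follows from \citep[Proposition~3.1]{Banjac2017} essentially verbatim, since that identification depends only on the limit of the successive differences and on the structure of $\mathcal{C}$, neither of which is affected by the summable perturbations.
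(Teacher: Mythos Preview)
Your overall architecture matches the paper's: recast the algorithm as an inexact iteration of the averaged ADMM operator, invoke \citep[Theorem~8]{Eckstein1992} for the feasible case, and handle infeasibility via the limit of successive differences together with \citep[Theorem~5.1]{Banjac2017}. The feasible part is fine.

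For the infeasible case, your route and the paper's diverge. The paper does \emph{not} use a shadow sequence. Instead it proves directly (Proposition~\ref{admm:prop:delta_approx}) that for any inexact iteration $\phi^{k+1}\approx_{\epsilon^k}T\phi^k$ with summable errors, $\phi^{k+1}-\phi^k\to-\ell$, the minimum-norm element of $\cl{\mathcal{R}(Id-T)}$. The argument adapts Pazy and Baillon to the perturbed setting: first $\phi^k/k\to-\ell$ by comparison with $T^k\phi^0$; then $\|\phi^{k+1}-\phi^k\|$ is shown to converge via a monotone auxiliary sequence; a Ces\`aro sandwich pins that limit at $\|\ell\|$; finally \citep[Lemma~2]{Pazy1971} upgrades norm convergence to strong convergence. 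Since $\ell$ depends only on $T$, the approximate and exact limits agree. A separate step (Proposition~\ref{admm:prop:limits_relations}) is then needed to extract $\delta y$, because $y^k$ is \emph{not} a component of the operator variable $\phi^k$: one has $y^k=\rho(\upsilon^k-z^k)$, and the paper uses the constraint $A\tilde x=\tilde z$ built into $\prox_f$ to show $\delta z = A\,\delta x$, hence $\delta y=\rho(\delta\upsilon - A\,\delta x)$.

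Your shadow-sequence approach can be made to work, but the justification you give has a gap. From nonexpansiveness alone you correctly obtain the uniform bound $\|\xi^k-\hat\xi^k\|\leq\sum_{j<k}\|e^j\|$, but this does \emph{not} imply $(\xi^{k+1}-\xi^k)-(\hat\xi^{k+1}-\hat\xi^k)\to 0$ by any telescoping or Kronecker argument: writing $d^k:=\xi^k-\hat\xi^k$, one has $\|d^k\|$ bounded and even convergent, but $d^k$ itself need not converge, and $Id-T$ is merely $2$-Lipschitz, so $(Id-T)\xi^k-(Id-T)\hat\xi^k$ is only known to be bounded. What actually closes the gap is the \emph{averaged} inequality
\[
\|Tu-Tv\|^2 \;\leq\; \|u-v\|^2 \;-\; \tfrac{1-\beta}{\beta}\,\|(Id-T)u-(Id-T)v\|^2,\qquad \beta=\tfrac{\alpha}{2},
\]
which, applied to $u=\xi^k$, $v=\hat\xi^k$ and summed, combines with the boundedness of $\|d^k\|$ and the summability of $\|e^k\|$ to give $\sum_k\|(Id-T)\xi^k-(Id-T)\hat\xi^k\|^2<\infty$, hence the desired asymptotic equality of successive differences. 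You should also not treat $\delta y$ as a direct component of the operator variable; as noted above, its extraction is a genuine additional step.
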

In order to prove  Theorem \ref{admm:thm:alg_convergence} we must first discuss some key properties of ADMM. This will provide the theoretical background that will allow us to present the proof in section \ref{admm:sec:proof}. Then, in section \ref{admm:chapter:krylov} we will discuss particular methods for the approximate computation of ADMM's steps that can lead to significant speedups.

\section{The asymptotic behaviour of approximate ADMM} \label{admm:sec:admm}
In this section we present ADMM in a general setting, express it as an iteration over an averaged operator, and then consider its convergence when this operator is computed only approximately.

ADMM is used to solve split optimization problems of the following form
\begin{equation}\label{admm:eqn:splitting} \tag{$\mathcal{S}$}
  \begin{array}{ll}
    \mbox{minimize}   & f(\chi) + g(\psi) \\
    \mbox{subject to} & \chi = \psi
 \end{array}
\end{equation}
where $\chi, \psi$ denote the decision variables on $\mathbb{R}^\ell$ which is equipped with an inner product induced norm $\norm{\cdot} = \langle \cdot, \cdot \rangle$. The functions $f: \mathbb{R}^\ell \rightarrow [-\infty, +\infty]$, $g: \mathbb{R}^\ell \rightarrow [-\infty, +\infty]$ are proper, lower-semicontinuous, and convex.

ADMM works by alternately minimizing the augmented Lagrangian of \eqref{admm:eqn:splitting}, defined as\footnote{\newcontent{Note that, following \cite{osqpinfeasibility}, this definition can account for the penalty parameters $\rho$ and $\sigma$ of Algorithm \ref{admm:alg:admm_approximate} via an appropriate definition for $\norm{\cdot}$, as done later in \eqref{admm:eqn:norm}.}}
\begin{equation} \label{admm:eqn:lagrangian}
  L(\chi, \psi, \omega) \eqdef f(\chi) + g(\psi) + \langle \omega, \chi - \psi \rangle + \frac{1}{2}\norm{\chi - \psi}^2
\end{equation}
over $\chi$ and $\psi$. That is, ADMM consists of the following iterations
\begin{align}
  \label{admm:eqn:chi_update} \tag{{\sc{admm}}\textsubscript{1}}
  \chi^{k+1} &= \argmin_\chi L(\chi, \psi^k, \omega^k) \\
  \label{admm:eqn:psi_update} \tag{{\sc{admm}}\textsubscript{2}}
  \psi^{k+1} &= \argmin_\psi L(\bar \chi^{k+1}, \psi, \omega^k) \\
  \label{admm:eqn:omega_update} \tag{{\sc{admm}}\textsubscript{3}}
  \omega^{k+1} &= \omega^{k} + (\bar \chi^{k+1} - \psi^{k+1})
\end{align}
where $\bar \chi^{k + 1}$ is a relaxation of $\chi^{k+1}$ with $\bar \chi^{k + 1} = \alpha \chi^{k + 1} + (1 - \alpha) \psi^{k}$ for some relaxation parameter $\alpha \in (0, 2)$.

Although \eqref{admm:eqn:chi_update}-\eqref{admm:eqn:omega_update} are useful for implementing ADMM, theoretical analyses of the algorithm typically consider ADMM as an iteration over an averaged operator. To express ADMM in operator form, note that \eqref{admm:eqn:chi_update} and \eqref{admm:eqn:psi_update} can be expressed in terms of the \emph{proximity operator}~\cite[\S 24]{Bauschke2017}
\begin{equation}
  \prox_f(\phi) \eqdef
  \argmin_{\chi}\left(f(\chi) +
  \frac{1}{2}\norm{\chi - \phi}^2 \right), \\
\end{equation}
and the similarly defined \newcontent{$\prox_g$},
as
\[
  \chi^{k + 1} = \prox_f(\psi^k - \omega^k),\quad \psi^{k + 1} = \prox_g(\bar \chi^{k + 1} + \omega^k),
\] respectively.
Now, using the \emph{reflections} of $\prox_f$ and $\prox_g$, i.e. $R_f \eqdef 2\prox_f - \Id$ and $R_g \eqdef 2\prox_g - \Id$, we can express ADMM as an
iteration over the \mbox{$\frac{1}{2} \alpha$-averaged} operator
\begin{equation}\label{admm:eqn:admm_operator}
  T \eqdef \left(1 - \frac{1}{2}\alpha \right)\Id + \frac{1}{2}\alpha R_f R_g
\end{equation}
on the variable $\phi^k \eqdef \bar \chi^k + \omega^{k - 1}$ (see \cite[\S3.A]{Thesis} or \cite[Appendix B]{Giselsson2016} for details). The variables $\psi, \chi, \omega$ of \eqref{admm:eqn:chi_update}-\eqref{admm:eqn:omega_update} can then be obtained from $\phi$ as
\begin{equation} \label{admm:eqn:relations}
  \chi^{k + 1} = \prox_f R_g \phi^k, \quad
  \psi^k = \prox_g \phi^k, \quad \text{and} \quad \omega^k = (\Id - \prox_g)\phi^k.
\end{equation}

We are interested in the convergence properties of ADMM when the operators $\prox_f, \prox_g$, and thus $T$, are computed inexactly. In particular, we suppose that the iterates are generated as
\begin{align}
  \label{admm:eqn:admm_sequence}
  (\forall k \in \mathbb{N}) \quad \phi^{k+1} =
  \left(1 - \frac{1}{2}\alpha\right)\phi_k + \frac{1}{2}\alpha \left(R_f \left(R_g \phi^k + \epsilon_g^k \right) + \epsilon_f^k\right)
\end{align}
for some error sequences $\epsilon_f^k, \epsilon_g^k \in \mathbb{R}^\ell$. Our convergence results will depend on the assumption that $\norm{\epsilon_f^k}$ and $\norm{\epsilon_g^k}$ are summable. This implies that $\phi^k$ can be considered as an approximate iteration over $T$, i.e.
\begin{equation}
  \phi^{k + 1} \approx_{\epsilon^k} T \phi^k,
\end{equation}
for some summable error sequence $(\epsilon^k)$. Indeed, since $R_g$ and $R_f$ are nonexpansive, we have
\begin{align*}
  \norm{R_f \left(R_g \phi^k + \epsilon_g^k \right) + \epsilon_f^k - R_f R_g \phi^k}
  &\leq \norm{R_g \phi^k + \epsilon_g^k - R_g \phi^k} + \norm{\epsilon_f^k} \\
  &\leq \norm{\epsilon_f^k} + \norm{\epsilon_g^k},
\end{align*}
or $\norm{\phi^{k+1} - T\phi^k} \leq \alpha\norm{\epsilon_f^k}/2 + \alpha\norm{\epsilon_g^k}/2$, from which the summability of $\norm{\epsilon^k}$ follows.

It is well known that, when $\norm{\epsilon_f^k}, \norm{\epsilon_g^k}$ are summable, \eqref{admm:eqn:admm_sequence} converges to a solution of \eqref{admm:eqn:splitting}, obtained by $\phi$ according to \eqref{admm:eqn:relations}, provided that \eqref{admm:eqn:splitting} has a KKT point \cite[Theorem 8]{Eckstein1992}. We will show that, under the summability assumption, $\delta \phi = \lim \phi^{k + 1} - \phi^k$ always converges, \emph{regardless} of whether \eqref{admm:eqn:splitting} has a KKT point:
\begin{theorem} \label{admm:thm:delta_approx}
  The successive differences $lim_{k\rightarrow \infty}(\phi^{k+1}-\phi^{k})$ of \eqref{admm:eqn:admm_sequence} converge to the \newcontent{unique minimum-norm element} of $\cl{\mathcal{R}(\Id - T)}$ provided that $\sum \norm{\epsilon_f^{k}} < \infty$ and $\sum \norm{\epsilon_g^{k}} < \infty$.
\end{theorem}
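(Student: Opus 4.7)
The plan is first to recast \eqref{admm:eqn:admm_sequence} in the compact form $\phi^{k+1} = T\phi^k + e^k$, where nonexpansiveness of $R_f$ together with the estimate derived just before the theorem statement gives $\norm{e^k}\le \tfrac{\alpha}{2}(\norm{\epsilon_f^k}+\norm{\epsilon_g^k})$, so $\sum_k\norm{e^k}<\infty$. The case $e^k\equiv 0$ is the classical Baillon--Bruck/Pazy result for $\tfrac{\alpha}{2}$-averaged nonexpansive operators; my task is therefore to carry that argument through in the presence of a summable perturbation, using only averagedness of $T$ and summability of $(\norm{e^k})$.

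\textbf{Quasi-monotone norms and square-summable increments.} Nonexpansiveness of $T$ applied to $(\phi^{k+1},\phi^k)$ immediately gives
\[
  \norm{\delta\phi^{k+1}}\le \norm{\delta\phi^k}+\norm{e^{k+1}}+\norm{e^k},
\]
so $\norm{\delta\phi^k}$ is quasi-nonincreasing and, by summability of $e^k$, $\ell\eqdef\lim_k\norm{\delta\phi^k}$ exists. Next I would apply the defining averagedness inequality
\[
  \norm{Tu-Tv}^2\le \norm{u-v}^2 - \tfrac{2-\alpha}{\alpha}\norm{(Id-T)u-(Id-T)v}^2
\]
with $(u,v)=(\phi^{k+1},\phi^k)$, substituting $Tu=\phi^{k+2}-e^{k+1}$ and $Tv=\phi^{k+1}-e^k$. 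Expanding the squares and controlling the $e^k$ cross-terms by Cauchy--Schwarz, telescoping in $k$ absorbs them into $\sum\norm{e^k}<\infty$ and delivers
\[
  \sum_k\norm{\delta\phi^{k+1}-\delta\phi^k}^2<\infty,
\]
in particular $\delta\phi^{k+1}-\delta\phi^k\to 0$.

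\textbf{From norm convergence to strong convergence of $\delta\phi^k$.} To upgrade $\norm{\delta\phi^k}\to\ell$ to convergence of the vectors themselves, I would pass through weak cluster points. Because $-\delta\phi^k=(Id-T)\phi^k-e^k$ with $e^k\to 0$, every weak cluster point of $(-\delta\phi^k)$ lies in $\cl{\mathcal{R}(Id-T)}$. The $\ell^2$-summability established in Step 2, together with a demiclosedness/Opial argument on $Id-T$, forces all such cluster points to coincide. Finally, weak convergence combined with $\norm{\delta\phi^k}\to\ell$ in a Hilbert space promotes to strong convergence, yielding a unique limit $\delta\phi$.

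\textbf{Identification and main obstacle.} It remains to identify $-\delta\phi$ with the minimum-norm element of $\cl{\mathcal{R}(Id-T)}$. The inclusion just established yields $\norm{\delta\phi}\ge v^\star\eqdef\inf\{\norm{u}:u\in\cl{\mathcal{R}(Id-T)}\}$. The reverse inequality is the delicate step: in the exact case it follows because $\norm{(Id-T)\phi^k}$ descends monotonically to $v^\star$; in the perturbed case I would bound, for any test $y$, the difference $\norm{\delta\phi^k}-\norm{(Id-T)y}$ by averagedness terms handled in Step~2 plus a telescoping residual controlled by $\sum\norm{e^k}$, then pass $k\to\infty$ and take $y$ along a minimizing sequence in $\cl{\mathcal{R}(Id-T)}$. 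The cleaner route is to invoke a perturbation theorem for Krasnoselski--Mann iterations with summable errors in the spirit of \citep[Prop.~5.34]{Bauschke2017}, which is tailored to exactly this setting. I expect this identification step to be the main obstacle; once it is reconciled with the bookkeeping of Steps~1--3, the conclusion that $\delta\phi^k$ converges strongly to the minimum-norm element of $\cl{\mathcal{R}(Id-T)}$ (up to the sign convention in the statement) follows.
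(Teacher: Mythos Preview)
Your setup and Step~1 are correct, and Step~2 (the $\ell^2$-summability of second differences via the averagedness inequality) is a valid refinement, though stronger than what is actually needed. The genuine gap is in Steps~3 and~4, and it is structural rather than merely a missing detail.

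In Step~3 you invoke a ``demiclosedness/Opial argument on $Id-T$'' to force uniqueness of the weak cluster points of $(\delta\phi^k)$. Neither tool applies here. Demiclosedness of $Id-T$ is a statement about its zero set: it says that if $\phi^{k_j}\rightharpoonup\phi$ and $(Id-T)\phi^{k_j}\to 0$ then $(Id-T)\phi=0$. But in the infeasible case $(Id-T)\phi^k$ does \emph{not} tend to zero, so demiclosedness says nothing. Opial's argument requires Fej\'er monotonicity of $(\phi^k)$ toward a nonempty fixed-point set, which again fails when $\operatorname{Fix}T=\varnothing$. Knowing only that $\lim_k\norm{\delta\phi^k}$ exists and that $\delta\phi^{k+1}-\delta\phi^k\to 0$ does not pin down a unique cluster point; in infinite dimensions you cannot even guarantee the cluster-point set lies in the (strong) closure $\cl{\mathcal{R}(Id-T)}$, since that set need not be convex and hence need not be weakly closed. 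So Step~3 cannot be completed before the identification in Step~4, and Step~4 is, as you acknowledge, only a sketch; the reference to \citep[Prop.~5.34]{Bauschke2017} does not help because that result addresses convergence to fixed points, not the divergent regime.

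The paper reverses your order of operations and avoids weak-cluster-point reasoning entirely. It first compares the perturbed orbit to the exact one, $\norm{\phi^n-T^n\phi^0}\le\sum_{i\le n}\norm{e^i}$, divides by $n$, and invokes Pazy's theorem for the exact iteration to obtain $\phi^k/k\to-\ell$, where $\ell$ is the minimum-norm element of $\cl{\mathcal{R}(Id-T)}$. It then shows $\lim_k\norm{\delta\phi^k}$ exists by the same quasi-monotonicity you use, and sandwiches it: on one side $\lim\norm{\delta\phi^k}\le\lim\norm{(Id-T)\phi^k}\le\norm{\ell}$ since $(Id-T)\phi^k\in\mathcal{R}(Id-T)$; on the other side $\norm{\ell}=\lim\norm{\phi^k}/k\le\lim k^{-1}\sum_{i<k}\norm{\delta\phi^i}=\lim\norm{\delta\phi^k}$ by Ces\`aro. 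Thus $\norm{(Id-T)\phi^k}\to\norm{\ell}$, and the \emph{minimum property} of $\cl{\mathcal{R}(Id-T)}$ (Pazy, Lemma~2) converts norm convergence to the infimum into strong convergence to the minimizer: for $u_k\in\cl{\mathcal{R}(Id-T)}$ with $\norm{u_k}\to\norm{\ell}$, the obtuse-angle condition $\langle u_k-\ell,-\ell\rangle\le 0$ gives $\norm{u_k-\ell}^2\le\norm{u_k}^2-\norm{\ell}^2\to 0$. This last step is the ingredient your outline lacks; it replaces the Opial machinery and works regardless of whether $\operatorname{Fix}T$ is empty.
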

\begin{proof}
  This is a special case of Proposition \ref{admm:prop:delta_approx} of Appendix \ref{admm:app:delta}.
\end{proof}
Theorem \ref{admm:thm:delta_approx} will prove useful in detecting infeasibility, as we will show in the following section.
\section{Proof of Theorem \ref{admm:thm:alg_convergence}} \label{admm:sec:proof}
We now turn our attention to proving Theorem \ref{admm:thm:alg_convergence}. To this end, note that \eqref{admm:eqn:main_problem} can be regarded as a special case of \eqref{admm:eqn:splitting} \cite{osqpinfeasibility,osqp}. This becomes clear if we set $\chi = (\tilde x, \tilde z), \psi = (x, z),$ and define
\begin{align}
  \label{admm:eqn:f}
  f(\tilde x, \tilde z) &\eqdef \frac{1}{2} \tilde x^T P \tilde x + q^T \tilde x + \mathcal{I}_{A\tilde x=\tilde z}(\tilde x, \tilde z), \\
  \label{admm:eqn:g}
  g(x, z) &\eqdef \mathcal{I}_{\mathcal{C}}(z),
\end{align}
where $\mathcal{I}_\mathcal{\mathcal{C}}(z)$ denotes the indicator function of $\mathcal{C}$. Furthermore, using the analysis of the previous section and defining the norm
\begin{equation} \label{admm:eqn:norm}
  \norm{(x, z)} = \sqrt{\sigma \norm{x}_2^2 + \rho \norm{z}_2^2}.
\end{equation} we find that Algorithm \ref{admm:alg:admm_approximate} is equivalent to  iteration \eqref{admm:eqn:admm_sequence}.

First, we show that if $\eqref{admm:eqn:main_problem}$ has a KKT point then Algorithm \ref{admm:alg:admm_approximate} converges to its primal-dual solution. Due to \eqref{admm:eqn:f}--\eqref{admm:eqn:norm}, every KKT point $(\bar x, \bar z, \bar y)$ of \eqref{admm:eqn:main_problem} produces a KKT point
\begin{equation} \label{admm:eqn:kkt_relation}
  (\bar \chi, \bar \psi, \bar \omega) = ((\bar x, \bar z), (\bar x, \bar z), (0, \bar y/\rho))
\end{equation}
for \eqref{admm:eqn:splitting}.  Likewise, every KKT point of \eqref{admm:eqn:splitting} is in the form of \eqref{admm:eqn:kkt_relation} (right) and gives a KKT point $(\bar x, \bar z, \bar y)$ for \eqref{admm:eqn:main_problem}. Thus, according to \cite[Theorem 8]{Eckstein1992}, Algorithm \ref{admm:alg:admm_approximate} converges to a KKT point of \eqref{admm:eqn:main_problem}, assuming that a KKT point exists.

It remains to show points $(i)-(iii)$ of Theorem \ref{admm:thm:alg_convergence}.
These are a direct consequence of \cite[Theorem 5.1]{osqpinfeasibility} and the following proposition:
\begin{proposition} \label{admm:prop:limits_relations}
  The following limits
  $$
   \delta x \eqdef \lim_{k \to \infty} x^{k+1} - x^k, \quad \delta y \eqdef \lim_{k \to \infty} y^{k + 1} - y^k, 
  $$
  defined by the iterates of Algorithm \ref{admm:alg:admm_approximate}, converge to the respective limits defined by the iterates of Algorithm \ref{admm:alg:admm_approximate} with $\mu^k = \nu^k = 0 \; \forall k \in \mathbb{N}$.
\end{proposition}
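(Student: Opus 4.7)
The plan is to route the proof through Theorem \ref{admm:thm:delta_approx}, which asserts that the successive differences $\delta\phi^k$ of iteration \eqref{admm:eqn:admm_sequence} converge to the minimal element of $\cl{\mathcal{R}(Id - T)}$. Crucially, this limit depends only on $T$ (equivalently on $f$ and $g$) and not on the particular summable error sequence, so the same limit $\delta\phi^\star$ is obtained whether Algorithm \ref{admm:alg:admm_approximate} is run with $\mu^k = \nu^k = 0$ or with nonzero summable $\mu^k, \nu^k$. The task reduces to transferring this invariance from $\phi$ down to the iterates $x^k$ and $y^k$.

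First I would make precise the identification between Algorithm \ref{admm:alg:admm_approximate} and iteration \eqref{admm:eqn:admm_sequence} with $f, g$ as in \eqref{admm:eqn:f}--\eqref{admm:eqn:g}, including the bookkeeping that converts the scalar tolerances $\mu^k, \nu^k$ into the vector errors $\epsilon_f^k, \epsilon_g^k$ of \eqref{admm:eqn:admm_sequence}. Because $g(x,z) = \mathcal{I}_\mathcal{C}(z)$ decouples $x$ from $z$, the operator $\prox_g$ is the identity on the $x$-block and $\Pi_\mathcal{C}$ on the $z$-block. Combined with \eqref{admm:eqn:relations} this yields $\psi^k = (\phi^k_x,\, \Pi_\mathcal{C}(\phi^k_z))$ and $\omega^k = (0,\, \phi^k_z - \Pi_\mathcal{C}(\phi^k_z))$, so that $x^k = \phi^k_x$ and $y^k/\rho = \phi^k_z - \Pi_\mathcal{C}(\phi^k_z)$ up to perturbations whose successive differences are absolutely summable.

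For $\delta x^k$ this gives directly $\delta x^k = \delta\phi^k_x + r_x^k$ with $\sum_k \norm{r_x^k} < \infty$, hence $r_x^k \to 0$ and $\delta x^k \to \delta\phi^\star_x$, matching the exact case. The main obstacle is $\delta y^k$: writing $\delta y^k / \rho = \delta\omega^k_z + o(1)$, one must show that $\delta\omega^k = \delta\phi^k - \delta\psi^k$ converges to a limit that does not depend on $(\mu^k, \nu^k)$. Firm nonexpansiveness of $\prox_g$ only yields $\norm{\delta\psi^k} \le \norm{\delta\phi^k}$, which is not by itself sufficient because $\phi^k$ diverges in the infeasible case and naive continuity of $\Pi_\mathcal{C}$ cannot be invoked.

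To close this gap I would re-apply the Pazy-type machinery underlying Theorem \ref{admm:thm:delta_approx} (Proposition \ref{admm:prop:delta_approx} in Appendix \ref{admm:app:delta}) to an auxiliary iteration whose state is $\omega$ rather than $\phi$, exploiting that the multiplier update $\omega^{k+1} = \omega^k + \bar\chi^{k+1} - \psi^{k+1}$ is itself driven by approximate evaluations of firmly nonexpansive maps with summable errors inherited from $\mu^k, \nu^k$. This yields convergence of $\delta\omega^k$ to a limit depending only on $T$ and therefore identical in the approximate and exact regimes. Combining with \citep[Proposition 3.1 and Theorem 5.1]{Banjac2017}, which provides the exact-case limits, gives $\delta y = \rho\, \delta\omega^\star_z$ with the same value in both cases, completing the proof.
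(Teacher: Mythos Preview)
Your treatment of $\delta x$ is essentially the paper's: identify the $x$-block of $\phi$ and invoke Theorem~\ref{admm:thm:delta_approx}, whose limit depends only on $T$. The divergence is in your handling of $\delta y$, and there the proposal has a genuine gap.

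You propose to reapply Proposition~\ref{admm:prop:delta_approx} to ``an auxiliary iteration whose state is $\omega$''. But the multiplier update $\omega^{k+1} = \omega^k + \bar\chi^{k+1} - \psi^{k+1}$ is not an autonomous recursion in $\omega$: both $\bar\chi^{k+1}$ and $\psi^{k+1}$ depend on the full state $\phi^k$ through $\chi^{k+1} = \prox_f(\psi^k - \omega^k)$ and $\psi^{k+1} = \prox_g(\bar\chi^{k+1} + \omega^k)$. There is no nonexpansive $S:\mathcal{D}\to\mathcal{D}$ with $\omega^{k+1} \approx_{\epsilon^k} S\omega^k$ and summable $\epsilon^k$; any attempt to force this form makes the ``errors'' carry the divergent $\psi^k$, destroying summability. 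Proposition~\ref{admm:prop:delta_approx} therefore does not apply as stated, and your sketch gives no mechanism to repair this.

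The paper sidesteps the issue by exploiting the specific structure of $f$ in \eqref{admm:eqn:f}: the indicator $\mathcal{I}_{A\tilde x = \tilde z}$ forces every (approximate) output of $\prox_f$ to satisfy $A\tilde x^{k+1} - \tilde z^{k+1} = e^k$ with $\norm{e^k}$ summable. Combined with the update relations \eqref{admm:eqn:expanded_operator_x}--\eqref{admm:eqn:expanded_operator_u}, this gives
\[
\lim_{k\to\infty}\delta z^k \;=\; \lim_{k\to\infty}\delta\tilde z^k \;=\; A\lim_{k\to\infty}\delta\tilde x^k \;=\; A\lim_{k\to\infty}\delta x^k,
\]
which is already known to be error-independent from the $\delta x$ part. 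Since $y^k = \rho(\upsilon^k - z^k)$ with $(x^k,\upsilon^k)=\phi^k$, invariance of $\delta y$ then follows from that of $\delta\phi$ and $\delta x$ by pure algebra, with no second invocation of the Pazy machinery. The missing idea in your proposal is precisely this use of the linear constraint $A\tilde x = \tilde z$ baked into $\prox_f$.
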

\begin{proof}
  According to \cite[\S 3.A]{Thesis} we can rewrite Algorithm \ref{admm:alg:admm_approximate} as follows
  \begin{subequations}
    \begin{align}
      z^{k} &\approx_{\nu^{k - 1}} \Pi_{\mathcal{C}}(\upsilon^k) \\
      (\tilde x^{k + 1}, \tilde z^{k + 1}) &\approx_{\mu^{k}} \prox_f
      ((x^k, 2z^k - \upsilon^k)) \\
      \label{admm:eqn:expanded_operator_x}
      x^{k + 1} &= x^{k} - \alpha(\tilde x^{k + 1} - x^k)\\
      \label{admm:eqn:expanded_operator_u}
      \upsilon^{k + 1} &= \upsilon^{k} + \alpha(\tilde z^{k + 1} - z^k)
    \end{align}
  \end{subequations}
  where $(x^k, \upsilon^k) \eqdef \phi^k$ and $y^k$ can be obtained as $y^k = \rho(\upsilon^k - z^k)$.

  Define $\delta x^k \eqdef x^{k + 1} - x^k$, $\forall k \in \mathbb{N}$ and $\delta z^k$, $\delta \upsilon^k$, $\delta \tilde x^k$, $\delta \tilde z^k$ in a similar manner. Due to Theorem \ref{admm:thm:delta_approx} and \cite[Lemma 5.1]{osqpinfeasibility} we conclude that $\lim_{k \to \infty} \delta x^k$ and $\lim_{k \to \infty} \delta \upsilon^k$, defined by the iterates of Algorithm \ref{admm:alg:admm_approximate}, converge to the respective limits defined by the iterates of Algorithm \ref{admm:alg:admm_approximate} with $\mu^k = \nu^k = 0 \; \forall k \in \mathbb{N}$.

  To show the same result for $\delta y$, first recall that $y^k = \rho(\upsilon^k - z^k)$.  It then suffices to show the desired result for $\lim_{k \to \infty} \delta z^k$. We show this using  arguments  similar to \cite[Proposition 5.1 (iv)]{osqpinfeasibility}. Indeed, note that due to \eqref{admm:eqn:expanded_operator_x}-\eqref{admm:eqn:expanded_operator_u} we have
  \begin{align*}
    -(\delta x^{k + 1} - \delta x^k)/\alpha = \delta x^k - \delta \tilde x^{k + 1} \\
    -(\delta \upsilon^{k + 1} - \delta \upsilon^k)/\alpha = \delta z^k - \delta \tilde z^{k + 1}
  \end{align*}
  and thus $\lim_{k \to \infty} \delta x^k = \lim_{k \to \infty} \delta \tilde x^k$ and $\lim_{k \to \infty} \delta z^k = \lim_{k \to \infty} \delta \tilde z^k$.
  Furthermore, due to \eqref{admm:eqn:f} we have
  $
  A\tilde x^{k + 1} - \tilde z^{k + 1} = e^k
  $
  for some sequence $(e^k)$ with summable norms, thus
  $$\lim_{k \to \infty} \delta \tilde z^k = A \lim_{k \to \infty} \delta \tilde x^k = A \lim_{k \to \infty} \delta x^k$$ and the claim follows due to \cite[Proposition 5.1 (i) and (iv)]{osqpinfeasibility}.
\end{proof}

\section{Krylov-Subspace Methods for ADMM} \label{admm:chapter:krylov}
In this Section, we suggest suitable methods for calculating  the individual steps of Algorithm \ref{admm:alg:admm_approximate}. We will focus on Semidefinite Programming, i.e., when $\mathcal{C}$ is the semidefinite cone. After an initial presentation of state-of-the-art methods used for solving linear systems approximately, we will describe (in \S \ref{admm:subsec:lobpcg}) LOBPCG, the suggested method for projecting onto the semidefinite cone. Note that some of our presentation recalls established linear algebra techniques that we include for the sake of completeness.

We begin with a discussion of the Conjugate Gradient method, a widely used method for the solution of the linear systems embedded in Algorithm \ref{admm:alg:admm_approximate}. Through CG's presentation we will introduce the Krylov Subspace which is a critical component of LOBPCG. Finally, we will show how we can assure that the approximation errors are summable across ADMM iterations, thus guaranteeing convergence of the algorithm.

The linear systems embedded in Algorithm \ref{admm:alg:admm_approximate} are in the following form
\begin{equation} \label{admm:eqn:kkt_system}
  \underbrace{
  \begin{bmatrix}
    P + \sigma I & \rho A^T \\
    \rho A & -\rho I
  \end{bmatrix}
  }_{\eqdef Q}
\begin{bmatrix}
  \tilde x^{k+1} \\
  \tilde z^{k + 1}
\end{bmatrix} =
\underbrace{
  \begin{bmatrix}
    \sigma I & \rho A^T \\
    0 & 0
  \end{bmatrix}
  \begin{bmatrix}
    x^k \\
    z^k - y^k/\rho
  \end{bmatrix}
  -
  \begin{bmatrix}
    q \\
    0
  \end{bmatrix}}_{\eqdef b^k}.
\end{equation}
The linear system \eqref{admm:eqn:kkt_system} belongs to the widely studied class of symmetric quasidefinite systems \cite{Benzi05}, \cite{Orban2017}. Standard scientific software packages, such as the Intel Math Kernel Library and the Pardiso Linear Solver, implement methods that can solve \eqref{admm:eqn:kkt_system} approximately. Since the approximate solution \eqref{admm:eqn:kkt_system} can be considered standard in the Linear Algebra community, we will only discuss the popular class of Krylov Subspace methods, which includes the celebrated Conjugate Gradient method\footnote{The Conjugate Gradient Method is only suitable for Positive Definite Linear Systems. However, \eqref{admm:eqn:kkt_system} can be solved with CG via a variable reduction which yields a smaller positive linear system \cite[\S1]{Orban2017}.}. Although CG has been used in ADMM extensively [\S 4.3.4]\cite{Boyd2011}, \cite{ODonoghue2016}, its presentation will be useful for introducing some basic concepts that are shared with the main focus of this section, i.e. the approximate projection to the semidefinite cone.

From an optimization perspective, Krylov subspace algorithms for solving linear systems can be considered an improvement of gradient methods. Indeed, solving $Ax = b$, where $A \in \mathbb{S}^n_{++}$ via gradient descent on the objective function $c(x) \eqdef \frac{1}{2}x^T Ax - x^T b$ amounts to the following iteration
\begin{equation} \label{admm:eqn:gd_linear_system}
  (\forall k \in \mathbb{N}) \quad x^{k + 1} = x^k - \beta^k \nabla c(x) =  x^k - \beta^k \underbrace{(Ax^k - b)}_{\eqdef r^k}
\end{equation}
where $\beta^k$ is the step size at iteration $k$. Note that
$$x^{k + 1} \in x_0 + \underbrace{\text{span}(r_0, Ar_0, \cdots A^k r_0)}_{\eqdef \mathcal{K}_k(A, r^0)},$$
where $\mathcal{K}_k(A, r_0)$ is known as the \emph{Krylov Subspace}. As a result, the following algorithm,
\begin{equation} \tag{CG} \label{admm:eqn:cg}
  (\forall k \in \mathbb{N}) \quad x^{k + 1} = \argmin_{x \in x^0 + \mathcal{K}_k(A, r^0)}{\frac{1}{2}x^T Ax - x^T b}
\end{equation}
is guaranteed to yield results that are no worse than gradient descent. What is remarkable is that \eqref{admm:eqn:cg} can be implemented efficiently in the form of two-term recurrences, resulting in the \emph{Conjugate Gradient} (CG) Algorithm \cite[\S11.3]{Golub2013}.

We now turn our attention to the projection to the Semidefinite cone\newcontent{, which we have already defined in the introduction. Recalling \eqref{admm:eqn:projection}, we note that} the projection to the semidefinite cone can be computed via either the positive or the negative eigenpairs of $A$. As we will see, the cost of approximating eigenpairs of a matrix depends on their cardinality, thus computing $\Pi_{\mathbb{S}_+}(A)$ with the positive eigenpairs of $A$ is preferable when $A$ has mostly nonpositive eigenvalues, and vice versa. In the following discussion we will focus on methods that compute the positive eigenpairs of $A$, thus assuming that $A$ has mostly nonpositive eigenvalues. The opposite case can be easily handled by considering $-A$.

Similarly to CG, the class of Krylov Subspace methods is very popular for the computation of ``extreme'' eigenvectors of an $n \times n$ symmetric matrix $A$ and can be considered as an improvement to gradient methods. In the subsequent analysis we will make frequent use of the real eigenvalues of $A$, which we denote with $\lambda_1 \geq \dots \geq \lambda_n$ and a set of corresponding orthogonal eigenvectors $\upsilon_1, \dots \upsilon_n$. The objective to be maximized in this case is the Rayleigh Quotient,
\begin{equation}
  r(x) \eqdef \frac{x^TAx}{x^T x}.
\end{equation}
due to the fact that the maximum and the minimum values of $r(x)$ are $\lambda_1$ and $\lambda_n$ respectively with $\upsilon_1$ and $\upsilon_n$ as corresponding maximizers \cite[Theorem 8.1.2]{Golub2013}. Thus, we end up with the following gradient ascent iteration
\begin{align} \label{admm:eqn:gradient_ascent}
  (\forall k \in \mathbb{N}) \quad x^{k + 1} &= \alpha^k x^k - \beta^k \nabla r(x^k) \\
  \nonumber
  &= \alpha^k x^k - 2\beta^k\left(Ax^k - r(x^k)x^k \right) 
\end{align}
where the ``stepsizes'' $\alpha^k$ and $\beta^k$ and the initial point $x^0$ are chosen so that all the iterates lie on the unit sphere. Although $r(x)$ is nonconvex, \eqref{admm:eqn:gradient_ascent} can be shown to converge when appropriate stepsizes are used. For example, if we choose $\alpha_k = -2\beta^k r(x^k) \Rightarrow x^{k + 1} \propto Ax^k$ $\forall k \in \mathbb{N}$, then \eqref{admm:eqn:gradient_ascent} is simply the \emph{Power Method}, which is known to converge linearly to an eigenvector associated with $\max|\lambda_i|$. Other stepsize choices can also assure convergence to an eigenvector associated with $\max \lambda_i$ \cite[11.3.4]{Bai2000}, \cite[Theorem 3]{Aishima2015}.

\begin{algorithm}
  \textbf{given} $A \in \mathbb{S}^n$ and an $n \times m$ thin matrix $S$ that spans the trial subspace\;
  orthonormalize $S$\;
  $(\tilde \Lambda, \tilde W) \leftarrow$ Eigendecomposition of $S^T A S$  with $\tilde \Lambda_{(1, 1)} \leq \dots \leq \tilde \Lambda_{(m, m)}$\; 
  \textbf{return} the \emph{Ritz vectors} $S \tilde W$ and \emph{Ritz values} $\tilde \Lambda$ of $A$ on $\text{span}(S)$\;
  \caption{The Rayleigh-Ritz Procedure}
  \label{admm:alg:rayleigh_ritz}
\end{algorithm}
Similarly to the gradient descent method for linear systems, the iterates of \eqref{admm:eqn:gradient_ascent} lie in the Krylov subspace $\mathcal{K}_k(A, x_0)$. As a result, the following Algorithm
\begin{equation} \label{admm:eqn:lanczos_naive}
  \begin{array}{lll}
    (\forall k \in \mathbb{N}) \quad x^{k + 1} = &\mbox{argmax}~~r(x) \\
    & \mbox{subject to} & x \in \mathcal{K}_k(A, x_0) \\
                      && \norm{x}_2 = 1,
  \end{array}
\end{equation}
is guaranteed to yield no worse results than any variant of \eqref{admm:eqn:gradient_ascent} in finding an eigenvector associated with $\max \lambda_i$, and in practice the difference is often remarkable. But how can the Rayleigh Quotient be maximized over a subspace? This can be achieved with the \emph{Rayleigh-Ritz} Procedure, defined in Algorithm \ref{admm:alg:rayleigh_ritz}, which computes approximate eigenvalues/vectors (called \emph{Ritz values/vectors}) that are restricted to lie on a certain subspace 
and are, under several notions, optimal \cite[11.4]{Parlett1998} (see discussion after Theorem \ref{admm:thm:bound}). Indeed, every iterate $x^{k + 1}$ of \eqref{admm:eqn:lanczos_naive} coincides with the last column of $X^{k +1}$, i.e. the largest Ritz vector, of the following Algorithm \cite[Theorem 11.4.1]{Parlett1998}
\begin{align} \label{admm:eqn:lanczos_rr}
  (\forall k \in \mathbb{N}) \quad
  (\Lambda^{k + 1}, X^{k + 1}) =
  &\text{ Rayleigh Ritz of } A \text{ on the trial }\\
  \nonumber
  &\text{ subspace } [x^0 \; Ax^0 \; \dots \; A^k x^0].
\end{align}
Note that unlike \eqref{admm:eqn:lanczos_naive}, Algorithm \eqref{admm:eqn:lanczos_rr} provides approximations to not only one, but $k$ eigenpairs, with the extremum ones exhibiting a faster rate of convergence.

Remarkably, similarly to the Conjugate Gradient algorithm, \eqref{admm:eqn:lanczos_rr} and  \eqref{admm:eqn:lanczos_naive} also admit an efficient implementation, in the form of three-term recurrences known as the \emph{Lanczos Algorithm} \cite[\S 10.1]{Golub2013}. In fact, the Lanczos Algorithm produces a sequence of orthonormal vectors 
that tridiagonalize $A$. Given this sequence of vectors, the computation of the associated Ritz pairs is inexpensive \cite[8.4]{Golub2013}. The Lanczos Algorithm is usually the method of choice for computing a few extreme eigenpairs for a symmetric matrix. However, although the Lanczos Algorithm is computationally efficient,
the Lanczos process can suffer from lack of orthogonality, with the issue becoming particularly obvious when a Ritz pair is close to converging to some (usually extremal) eigenpair \cite{Paige1980}.
Occasional re-orthogonalizations, with a cost of $O(n^2 l^k)$ where $l^k$ is the dimension of the $k-$th trial subspace, are required to mitigate the effects of the numerical instability. To avoid such a computational cost, the Krylov subspace is restarted or shrunk so that $l^k$, and thus the computational costs of re-othogonalizations, are bounded by an acceptable amount. The Lanczos Algorithm with occasional restarts is the approach employed by the popular eigensolver \texttt{ARPACK} \cite{Lehoucq1998} for symmetric matrices.

However, there are two limitations of the Lanczos Algorithm. Namely, it does not allow for efficient warm starting of multiple eigenvectors since its starting point is a single eigenvector, and it cannot detect the multiplicity of the approximated eigenvalues as it normally provides a single approximate eigenvector for every invariant subspace of $A$.


\emph{Block Lanczos}
addresses both of these issues. Similarly to the standard Lanczos Algorithm, Block Lanczos computes Ritz pairs on the trial block Krylov Subspace $\mathcal{K}_k(A, X_0) \eqdef \text{span}(X^0, AX^0, \dots, A^k X^0)$ where $X_0$ is an $n \times m$ matrix that contains a set of initial eigenvector guesses. Thus, Block Lanczos readily allows for the warm starting of multiple Ritz pairs. Furthermore, block methods handle clustered and multiple eigenvectors (of multiplicity up to $m$) well. However, these benefits comes at the cost of higher computational costs, as the associated subspace is increased by $m$ at every iteration. This, in turn, requires more frequent restarts, particularly for the case where $m$ is comparable to $n$.

In our experiments we observed that a single block iteration often provides Ritz pairs that give good enough projections for Algorithm \ref{admm:alg:admm_approximate}. This remarkably good performance motivated us to use the \emph{Locally Optimal Block Preconditioned Conjugate Gradient Method} (LOBPCG), presented in the following subsection.
\subsection{LOBPCG: The suggested eigensolver} \label{admm:subsec:lobpcg}
 LOBPCG \cite{Knyazev2001} is a block Krylov method that, after the first iteration, uses the trial subspace $\text{span}(X^k, AX^k, \Delta X^k)$, where $\Delta X^k \eqdef X^k - X^{k - 1}$, and sets $X^{k + 1}$ to Ritz vectors corresponding to the $m$ largest eigenvalues. Thus, the size of the trial subspace is fixed to $3m$. As a result, LOBPCG keeps its computational costs bounded and is particularly suitable for obtaining Ritz pairs of modest accuracy, as it not guaranteed to exhibit the super-linear convergence of Block Lanczos \cite{Bai2000} which might only be observed after a large number of iterations.
 Algorithm \ref{admm:alg:lobpcg} presents LOBPCG for computing the positive eigenpairs of a symmetric matrix\footnote{Note that Algorithm \ref{admm:alg:lobpcg} performs Rayleigh-Ritz on the subspace spanned by $[X^k \; AX^k -  X^k \Lambda^k\; \Delta X^k]$. Since $\Lambda^k$ is diagonal, this is mathematically the same as using $[X^k \; AX^k \; \Delta X^k]$ but using $AX^k -  X^k \Lambda^k$ improves the conditioning of the Algorithm.}. Note that the original LOBPCG Algorithm \cite[Algorithm 5.1]{Knyazev2001} is more general in the sense that it allows for the solution of generalized eigenproblems and supports preconditioning. We do not discuss these features of LOBPCG as they are not directly relevant to Algorithm \ref{admm:alg:admm_approximate}. On the other hand \cite{Knyazev2001} assumes that the number of desired eigenpairs is known a priori. However, this is not the case for $\Pi_{\mathbb{S}_+}$, where the computation of all \emph{positive} eigenpairs is required.

 In order to allow the computation of all the positive eigenpairs, $X^{k}$ is expanded when more than $m$
 positive eigenpairs are detected in the Rayleigh-Ritz Procedure in Line 6 of Algorithm \ref{admm:alg:lobpcg}. Note that the Rayleigh-Ritz Procedure produces $3m$ Ritz pairs, of which usually $n$ are approximate eigenpairs, (or $2m$ in the first iteration of LOBPCG) and the number of positive Ritz values is always no more than the positive eigenvalues of $A$ \cite[10.1.1]{Parlett1998}, thus the subspace $X^k$ must be expanded when more than $m$ positive Ritz values are found.

 It might appear compelling to expand the subspace to include \emph{all} the positive Ritz pairs computed by Rayleigh-Ritz. However, this can lead to ill-conditioning, as we proceed to show. Indeed, consider the case where we perform LOBPCG starting from an initial matrix $X^0$. In the first iteration, Rayleigh Ritz is performed on $\text{span}(X^0, AX^0)$. Suppose that all the Rayleigh values are positive and we thus decide to include all of the Ritz vectors in $X^1$, setting $X^1 = [X^0\; AX^0]W$ for some nonsingular $W$. In the next iteration we perform Rayleigh Ritz on the subspace spanned by
 \begin{equation*}
  \begin{bmatrix} X^1 & AX^1 & \Delta X^1 \end{bmatrix} =
    \begin{bmatrix}
      X^0 & AX^0 & AX^0 & A^2X^0 & \Delta X^1
    \end{bmatrix}
    \begin{bmatrix}
      W & & \\
      & W & \\
      & & I
    \end{bmatrix}.
  \end{equation*}
The problem is that the above matrix is rank deficient.  Thus one has to rely on a numerically stable Algorithm, like Householder QR, for its orthonormalization (required by the Rayleigh-Ritz Procedure) instead of the more efficient Cholesky QR algorithm \cite[page 251]{Stewart1998}.
Although, for this example, one can easily reduce columns from the matrix so that it becomes full column rank, the situation becomes more complicated when not all of the Rayleigh values are positive.
In order to avoid this numerical instability, and thus be able to use Cholesky QR for othonormalizations, we expand $X^k$ whenever necessary by a fixed size (equal to a small percentage of $n$ (the size of $A$), e.g. $n/50$) with a set of randomly generated vectors.
\begin{algorithm}
  \textbf{given} $A \in \mathbb{S}^n$ and the $n \times m$ thin matrix $X^0$ that spans the initial trial subspace\;
  $(\Lambda^0, X^0) \leftarrow$ Rayleigh-Ritz for $A$ on the trial subspace $\text{span}(X^0)$\;
  $\Delta X^0 \leftarrow$ empty $n \times 0$ matrix\;
  \For{$k = 0, \dots$ until convergence}{
    $R^k \leftarrow A X^k - X^k \Lambda^k$\;
    $(\Lambda^{k + 1}, X^{k + 1}) \leftarrow$ Apply Rayleigh-Ritz
    for $A$ on the trial subspace $\text{span}(X^k, R^k, \Delta X^k)$ and return the $m$ largest eigenpairs\;
    $\Delta X^{k + 1} \leftarrow X^{k + 1} - X^k$\;
    Expand $\Lambda^{k + 1}, X^{k + 1}$ with randomly generated elements and set $m = \text{size}(X^{k + 1}, 2) = \text{size}(\Lambda^{k + 1}, 2)$ if the positive Ritz values of line 6 were more than $m$.
  }
  \textbf{return} $X^k, \Lambda^{k}$ containing $m$ Ritz pairs that approximate the positive eigenpairs of $A$
  \caption{The LOBPCG Algorithm for Computing the Positive Eigenpairs of a Symmetric Matrix}
  \label{admm:alg:lobpcg}
\end{algorithm}
\paragraph{When is projecting to $\mathbb{S}_+$ with LOBPCG most efficient?}
Recall that there exist two ways to project a matrix $A$ into the semidefinite cone. The first is to compute all the positive eigenpairs $\Lambda_+, V_+$ of $A$ and set $\Pi_{\mathbb{S}_+}(A) = V_+ \Lambda_+ V_+^T$.
The opposite approach is to compute all the \emph{negative} eigenpairs $\Lambda_-, V_-$ of $A$ and set $\Pi_{\mathbb{S}_+}(A) = I - V_- \Lambda_- V_-^T$.
The per-iteration cost of LOBPCG is $O(n^2 m)$ where $m$ is the number of computed eigenpairs. Thus, when most of the eigenvalues are nonpositive, then the positive eigenpairs should be approximated, and vice versa.

As a result, LOBPCG is most efficient when the eigenvalues of the matrix under projection are either almost all nonnegative or almost all nonpositive, in which case LOBPCG exhibits an almost quadratic complexity, instead of the cubic complexity of the full eigendecomposition. This is the case when ADMM converges to a low rank primal or dual solution of \eqref{admm:eqn:main_problem}. Fortunately, low rank solutions are often present or desirable in practical problems \cite{Lemon2016}. On the other hand, the worst case scenario is when half of the eigenpairs are nonpositive and half nonnegative, in which case LOBPCG exhibits worse complexity than the full eigendecomposition and thus the latter should be preferred.

\subsection{Error Analysis \& Stopping Criteria} \label{admm:subsec:termination}
Algorithm \ref{admm:alg:admm_approximate} requires that the approximation errors in lines 3 and 5 are bounded by a summable sequence. As a result, bounds on the accuracy of the computed solutions are necessary to assess when the approximate algorithms (CG and LOBPCG) can be stopped.

For the approximate solution of the Linear System \eqref{admm:eqn:kkt_system} one can easily devise such bounds. Indeed, note that the left hand matrix of \eqref{admm:eqn:kkt_system} is fixed across iterations and is full rank.
We can check if an approximate solution $[\bar x^{k + 1}; \; \newcontent{\bar z^{k + 1}}]$ satisfies the condition
\begin{equation} \label{admm:eqn:kkt_termination}
  \norm{
\begin{bmatrix}
  \tilde x^{k+1} \\
  \tilde z^{k + 1}
\end{bmatrix}
-
\begin{bmatrix}
  \bar x^{k + 1} \\
  \newcontent{\bar z^{k + 1}}
\end{bmatrix}
  }_2 \leq \mu^k
\end{equation}
of Algorithm \ref{admm:alg:admm_approximate} easily, since (recalling Q is the KKT matrix defined in \ref{admm:eqn:kkt_system})
\begin{equation}
  \norm{
\begin{bmatrix}
  \tilde x^{k+1} \\
  \tilde z^{k + 1}
\end{bmatrix}
-
\begin{bmatrix}
  \bar x^{k + 1} \\
  \newcontent{\bar z^{k + 1}}
\end{bmatrix}
  }_2
\leq
  \norm{Q^{-1}}
  \underbrace{
  \norm{
  b^k
  -
Q
\begin{bmatrix}
  \bar x^{k + 1} \\
  \newcontent{\bar z^{k + 1}}
\end{bmatrix}
  }_2}_{\eqdef r^k}.
\end{equation}
Since $\norm{Q^{-1}}$ is constant across iterations, it can be ignored when considering the summability of the approximation errors \ref{admm:eqn:kkt_termination}.
Thus, we can terminate CG (or any other iterative linear system solver employed) when the \emph{residual} $r^k$ of the approximate solution $[\bar x^{k + 1};\; \newcontent{\bar z^{k + 1}}]$ becomes less than a summable sequence e.g. $1/k^2$.

On the other hand, controlling the accuracy of the projection to the Semidefinite Cone requires a closer examination. Recall that, given a symmetric matrix $A$ that is to be projected\footnote{Note that the matrices under projection depend on the iteration number of ADMM. We do not make this dependence explicit in order to keep the notation uncluttered.}, our approach uses LOBCPG to compute a set of positive Ritz pairs $\tilde V, \tilde \Lambda$ approximating $V_+, \Lambda_+$ of \eqref{admm:eqn:eigendecomposition} which we then use to approximate  $\Pi_{\mathbb{S}_+^n}(A) = V_+ \Lambda_+ V_+^T$ as\footnote{When LOBPCG approximates the negative eigenspace (because the matrix under projection is believed to be almost positive definite), then all of the results of this section hold mutatis mutandis. Refer to for more details.} $\tilde V \tilde \Lambda \tilde V^T$.
A straightforward approach would be to quantify the projection's accuracy with respect to the accuracy of the Ritz pairs. Indeed, if we assume that our approximate positive eigenspace is ``sufficiently rich'' in the sense that $\lambda_{\max}(\tilde V_{\perp} A \tilde V_{\perp}) \leq 0$, then we get $m = \tilde m$ \cite[Theorem 10.1.1]{Parlett1998}, thus we can define $\Delta \Lambda = \Lambda_+ - \tilde \Lambda$, $\Delta V = V_+ - \tilde V$ which then gives the following bound
\begin{equation}
  \norm{V_+ \Lambda_+ V_+^T - \tilde V \tilde \Lambda \tilde V^T} \leq
  2 \norm{\Delta V \Lambda_+ V_+^T} + \norm{V_+ \Delta \Lambda V_+^T} + O(\norm{\Delta}^2)
\end{equation}
with $\norm{\Delta} \eqdef \max(\norm{\Delta V}, \norm{\Delta \Lambda})$. Standard results of eigenvalue perturbation theory can be used to bound the error in the computation of the eigenvalues, i.e. by $\norm{\Delta \Lambda}^2_F \leq 2 \norm{R}^2_F$ \cite[Theorem 11.5.2]{Parlett1998}\footnote{Note that following \cite[Theorem 11.5.1]{Parlett1998}, $\lambda_{\max}(\tilde V_{\perp} A \tilde V_{\perp}) \leq 0$ implies that the indices of $\alpha$ can coincide with the indices of $\theta$ in \cite[Theorem 11.5.2]{Parlett1998}.} where
$$
R \eqdef A \tilde V - \tilde V \newcontent{\tilde \Lambda}.
$$
In contrast, $\norm{\Delta V}$ is ill-conditioned, as the eigenvectors are not uniquely defined in the presence of multiple (i.e. clustered) eigenvalues. At best, eigenvalue perturbation theory can give $\norm{\Delta V} \lessapprox \norm{R}/\text{gap}$ \cite[Theorem 3.1 and Remark 3.1]{Nakatsukasa2020} where
$$\text{gap} \eqdef \min_{i, j}\left(\tilde \Lambda_{(i, i)} - \Lambda_{-{(j, j)}}\right).$$ This implies that the projection accuracy depends on the separation of the spectrum and can be very poor in the presence of small eigenvalues. Note that unlike $R$ that is readily computable from $(\tilde V, \tilde \Lambda)$, $\text{``gap''}$ is, in general, unknown and non-trivial to compute, thus further complicating the analysis.

To overcome these issues, we employ a novel bound that shows that, although the accuracy of the Ritz pairs depends on the separation of eigenvalues, the approximate projection does not:
\begin{theorem} \label{admm:thm:bound}
  Assume that $\tilde V$ and $\tilde \Lambda$ are such that $\tilde \Lambda = \tilde V ^T A \tilde V$. Then
  $$
    \norm{\tilde V \tilde \Lambda \tilde V^T - \Pi_{\mathbb{S}_+}(A)}_F^2 \leq 2 \norm{R}^2_F + \norm{\Pi_{\mathbb{S}_+}(\tilde V_\perp^T A \tilde V_\perp)}_F^2
  $$
\end{theorem}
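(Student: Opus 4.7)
The plan is to exploit the standard variational inequality $\norm{\Pi_C(x) - y}^2 \leq \norm{x-y}^2 - \norm{x - \Pi_C(x)}^2$ (valid for any $y$ in a closed convex set $C$) with $C = \mathbb{S}_+$. Under the standing assumption (from the LOBPCG context preceding the theorem) that $\tilde V$ has orthonormal columns and the Ritz values in $\tilde \Lambda$ are nonnegative, the matrix $\tilde X \eqdef \tilde V \tilde \Lambda \tilde V^T$ itself lies in $\mathbb{S}_+$, so the inequality yields
$$\norm{\Pi_{\mathbb{S}_+}(A) - \tilde X}_F^2 \leq \norm{A - \tilde X}_F^2 - \norm{A - \Pi_{\mathbb{S}_+}(A)}_F^2,$$
reducing the theorem to bounding the right-hand side.

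For the first term I would expand $A$ in the orthonormal basis $[\tilde V,\,\tilde V_\perp]$. Setting $E \eqdef \tilde V^T A \tilde V_\perp$ and $B \eqdef \tilde V_\perp^T A \tilde V_\perp$, the $(1,1)$-block of $A - \tilde X$ vanishes because $\tilde \Lambda = \tilde V^T A \tilde V$, so $\norm{A - \tilde X}_F^2 = 2\norm{E}_F^2 + \norm{B}_F^2$. Next I would identify $R$ with $E$: since $R = A\tilde V - \tilde V \tilde \Lambda = (I - \tilde V \tilde V^T)A\tilde V = \tilde V_\perp E^T$ and $\tilde V_\perp$ has orthonormal columns, $\norm{R}_F = \norm{E}_F$, yielding $\norm{A - \tilde X}_F^2 = 2\norm{R}_F^2 + \norm{B}_F^2$. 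Substituting back into the variational inequality, the target bound follows once we establish
$$\norm{B}_F^2 - \norm{A - \Pi_{\mathbb{S}_+}(A)}_F^2 \leq \norm{\Pi_{\mathbb{S}_+}(B)}_F^2.$$

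The final step is where I expect the main difficulty. The Moreau decomposition gives $\norm{B}_F^2 = \norm{\Pi_{\mathbb{S}_+}(B)}_F^2 + \norm{\Pi_{\mathbb{S}_+}(-B)}_F^2$ and $\norm{A - \Pi_{\mathbb{S}_+}(A)}_F^2 = \norm{\Pi_{\mathbb{S}_+}(-A)}_F^2$, reducing the inequality to the equivalent statement $\norm{\Pi_{\mathbb{S}_+}(-B)}_F^2 \leq \norm{\Pi_{\mathbb{S}_+}(-A)}_F^2$. Since $-B = \tilde V_\perp^T (-A)\tilde V_\perp$ is the compression of $-A$ onto an $(n-m)$-dimensional subspace, the Cauchy interlacing theorem yields $\lambda_j(-B) \leq \lambda_j(-A)$ for all $j = 1,\dots,n-m$. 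Pairing each strictly positive eigenvalue of $-B$ with its index-matched (and hence strictly positive and at least as large) eigenvalue of $-A$ then furnishes the required comparison of the two positive-part squared norms. The only subtle bookkeeping is checking that this pairing is injective and lands in the positive part of the spectrum of $-A$, but this is immediate from the interlacing inequality once the indexing is made explicit.
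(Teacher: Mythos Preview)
Your argument is correct and self-contained. The paper does not actually prove this theorem: its proof consists solely of the sentence ``This is a restatement of \citep[Corollary~2.1]{Goulart2019}.'' So there is no in-paper argument to compare against; you have supplied a complete proof where the paper defers to an external reference.

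A few remarks on your write-up. First, you are right that the theorem as stated omits hypotheses that are implicit from context: orthonormality of $\tilde V$ and nonnegativity of the Ritz values in $\tilde\Lambda$ are both needed (without the latter, $\tilde V\tilde\Lambda\tilde V^T\notin\mathbb{S}_+$ and the bound can fail---e.g.\ take $\tilde V=I$ with $A$ indefinite). You flag this explicitly, which is appropriate. Second, your final interlacing step is cleaner than you suggest: from $\lambda_j(-B)\le\lambda_j(-A)$ for $j=1,\dots,n-m$, whenever $\lambda_j(-B)>0$ one has $0<\lambda_j(-B)\le\lambda_j(-A)$ and hence $\lambda_j(-B)^2\le\lambda_j(-A)^2$; summing over such $j$ and then enlarging the index set to all $j$ with $\lambda_j(-A)>0$ gives $\norm{\Pi_{\mathbb{S}_+}(-B)}_F^2\le\norm{\Pi_{\mathbb{S}_+}(-A)}_F^2$ directly, with no subtle bookkeeping required.
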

\begin{proof}
  This is a restatement of \cite[Corollary 2.1]{Goulart2020}.
\end{proof}
Note that the above result does not depend on the assumption that $\lambda_{\max}(\tilde V_{\perp} A \tilde V_{\perp})$ is nonpositive or that $m = \tilde m$.
Nevertheless, with a block Krylov subspace method it is often expected that $\lambda_{\max}(\tilde V_{\perp} A \tilde V_{\perp})$ will be either small or negative, thus the bound of Theorem \ref{admm:thm:bound} will be dominated by $\norm{R}$. The assumption $\tilde \Lambda = \tilde V ^T A \tilde V$ is satisfied when $\tilde V$ and $\tilde \Lambda$ are generated with the Rayleigh Ritz Procedure and thus holds for Algorithm \ref{admm:alg:lobpcg}. In fact, the use of the Rayleigh-Ritz, which is employed by Algorithm \ref{admm:alg:lobpcg}, is strongly suggested by Theorem \ref{admm:thm:bound} as it minimizes $\norm{R}_F$ \cite[Theorem 11.4.2]{Parlett1998}.

We suggest terminating Algorithm \ref{admm:alg:lobpcg} when every positive Ritz pair has a residual with norm bounded by a sequence that is summable across ADMM's iterations. Then, excluding the effect of $\norm{\Pi_{\mathbb{S}_+}(\tilde V_\perp^T A \tilde V_\perp)}_F^2$, which appears to be negligible according to the results of the next section, Theorem \ref{admm:thm:bound} implies that  the summability requirements of Algorithm \ref{admm:alg:lobpcg} will be satisfied.
Either way, the term $\norm{\Pi_{\mathbb{S}_+}(\tilde V_\perp^T A \tilde V_\perp)}_F^2$ can be bounded by $(n-\tilde m)\lambda_{\max}^2(\tilde V_{\perp} A \tilde V_{\perp})$, where $\lambda_{\max}^2(\tilde V_{\perp} A \tilde V_{\perp})$ can be estimated with a projected Lanczos methods.




\section{Experiments and Software}
In this section we provide numerical results for Semidefinite Programming with Algorithm \ref{admm:alg:admm_approximate}, where the projection to the Semidefinite Cone is performed with Algorithm \ref{admm:alg:lobpcg}. Our implementation is essentially a modification of the optimization solver \texttt{COSMO.jl}. \texttt{COSMO.jl} is an open-source Julia implementation of Algorithm \ref{admm:alg:admm_approximate} which allows for the solution of problems in the form \eqref{admm:eqn:main_problem} for which $\mathcal{C}$ is a composition of translated cones $\{\mathcal{K}_i + b_i\}$. Normally, \texttt{COSMO.jl} computes ADMM's steps to machine precision and supports any cone $\mathcal{K}_i$ for which a method to calculate its projection is provided\footnote{Operations for testing if a vector belongs to $\mathcal{K}_i$, its polar and its recession must be provided. These operations might be used to check for termination of the Algorithm, which, by default, is checked every 40 iterations. For the Semidefinite Cone, both of these tests can be implemented via the Cholesky factorization.}. \texttt{COSMO.jl} provides default implementations for various cones, including the Semidefinite cone, where \texttt{LAPACK}'s \texttt{syevr} function is used for its projection. The modified solver used in the experiments of this section can be found online:
\centerline{\texttt{https://github.com/nrontsis/ApproximateCOSMO.jl}}.
Code reproducing the results of this section is also publicly available.\footnote{For subsections \ref{admm:subsec:sdplib} and \ref{admm:subsec:infeasible} at \texttt{https://github.com/nrontsis/SDPExamples.jl}.}

We compared the default version of \texttt{COSMO.jl} with a version where the operation \texttt{syevr} for the Semidefinite Cone is replaced with Algorithm \ref{admm:alg:lobpcg}. We have reimplemented \texttt{BLOPEX}, the original  \texttt{MATLAB} implementation of LOBPCG \cite{Knyazev2001}, in Julia. For the purposes of simplicity, our implementation supports only symmetric standard eigenproblems without preconditioning. For these problems, our implementation was tested against \texttt{BLOPEX} to assure that exactly the same results (up to machine precision) are returned for identical problems. Furthermore, according to \S\ref{admm:subsec:lobpcg}
we provide the option to compute all eigenvalues that are larger or smaller than a given bound. 

At every iteration $k$ of Algorithm \ref{admm:alg:admm_approximate} we compute approximate eigenpairs of every matrix that is to be projected onto the semidefinite cone. If, at the previous iteration of ADMM, a given matrix \newcontent{was} estimated to have less than a third of its eigenvectors positive, then LOBPCG is used to compute its \emph{positive} eigenpairs, according to \eqref{admm:eqn:projection} (middle). If it had less than a third of its eigenvectors negative, then LOBPCG computes its negative eigenpairs according to \eqref{admm:eqn:projection} (right). Otherwise, a full eigendecomposition is used.

In every case, LOBPCG is terminated when all of the Ritz pairs have a residual with norm less than $10/k^{1.01}$.
According to \S\ref{admm:subsec:termination}, this implies that the projection errors are summable across ADMM's iterations, assuming that the rightmost term of Theorem \ref{admm:thm:bound} is negligible. Indeed, in our experiments, these terms were found to converge to zero very quickly, and we therefore ignored them. A more theoretically rigorous approach would require the consideration of these terms, a bound of which can obtained using e.g. a projected Lanczos algorithm, as discussed in \S\ref{admm:subsec:termination}.

The linear systems of Algorithm \ref{admm:alg:admm_approximate} are solved to machine precision via an LDL factorization \cite[\S 16.2]{Nocedal2006}. We did not rely on an approximate method for the solution of the linear system because, in the problems that we considered, the projection to the Semidefinite Cone required the majority of the total time of Algorithm \ref{admm:alg:admm_approximate}. Nevertheless, the analysis of presented in Sections \ref{admm:sec:algorithm}-\ref{admm:sec:proof} allows for the presence of approximation errors in the solution of the linear systems.

\subsection{Results for the SDPLIB collection} \label{admm:subsec:sdplib}
We first consider problems of the SDPLIB collection, in their dual form, i.e.
\begin{equation}\label{admm:eqn:sdplib}
  \begin{array}{ll}
    \mbox{maximize}   & \langle F_0, Y\rangle \\
    \mbox{subject to} & \langle F_i, Y \rangle = c_i, \quad Y \in \mathbb{S}^n_+.
  \end{array}
\end{equation}
The problems are stored in the sparse SDPA form, which was designed to efficiently represent SDP problems in which the matrices $F_i, i = 0, \dots m$ are block diagonal with sparse blocks. If the matrices $F_i$ consist of $\ell$ diagonal blocks, then the solution of \eqref{admm:eqn:sdplib} can be obtained by solving
\begin{equation}\label{admm:eqn:sdplib_split}
  \begin{array}{ll}
    \mbox{maximize}   & \sum_{j = 1}^\ell \langle F_{0, j}, Y_j \rangle \\
    \mbox{subject to} & \sum_{j = 1}^\ell \langle F_{i, j}, Y_j \rangle = c_i, \quad  Y_j \in \mathbb{S}^{n_j}_+ \quad j = 1, \dots, \ell.
  \end{array}
\end{equation}
where $F_{i, j}$ denotes the $j-$th diagonal block of $F_i$ and $Y_j$ the respective block of $Y$. Note that \eqref{admm:eqn:sdplib_split} has more but smaller semidefinite variables than \eqref{admm:eqn:sdplib}; thus it is typically solved by solvers like COSMO.jl more efficiently than \eqref{admm:eqn:sdplib}. As a result, our results refer to the solution of problems in the form \eqref{admm:eqn:sdplib_split}.

Table \ref{admm:tab:sdplib}, presented in the Appendix,
shows the results on all the problems of SDPLIB problems for which the largest semidefinite variable is of size at least $50$. We observe that our approach can lead to a significant speedup of up to 20x. At the same time, the robustness of the solver is not affected, in the sense that the number of iterations to reach convergence is not, on average, increased by using approximate projections. It is remarkable that for every problem that the original \texttt{COSMO.jl} implementation converges within 2500 iterations (i.e. the default maximum iteration limit), our approach also converges with a faster overall solution time.
\subsection{Infeasible Problems} \label{admm:subsec:infeasible}
Next, we demonstrate the asymptotic behavior of Algorithm \ref{admm:alg:admm_approximate} on the problem \texttt{infd1} of the SDPLIB collection. This problem can be expressed in the form \eqref{admm:eqn:main_problem} with  $\mathcal{C} = \set{\vc_u(X)}{X \in \mathbb{S}^{30}}$ (the set of vectorized $30 \times 30$ positive semidefinite matrices), and $x \in \mathbb{R}^{10}$.

As the name suggests, \texttt{infd1} is dual infeasible. Following \cite[\S 5.2]{osqpinfeasibility}, \texttt{COSMO} detects dual infeasibility in conic problems when the certificate \eqref{admm:eqn:dual_infeasibility} holds approximately, that is when $\delta x^k \neq 0$ and
\begin{equation*}
  \text{dist}_{{C}^{\infty}}\left(A \bar x^k \right) < \epsilon_\text{dinf}, \quad \text{and} \quad q^T \bar x^k < \epsilon_\text{dinf},
\end{equation*}
where $\bar x^k \eqdef \delta x^k/ ||\delta x^k||$, for a positive tolerance $\epsilon_\text{dinf}$. Figure \ref{admm:fig:dual_infeasibility}, depicts the convergence of these quantities both for the case where the projection to the semidefinite cone are computed approximately and when LOBPCG is used. The convergence of the successive differences to a certificate of dual infeasibility is practically identical.
\begin{figure}
	\centering
	\begin{minipage}{.49\textwidth}
	\includegraphics[width=.99\textwidth]{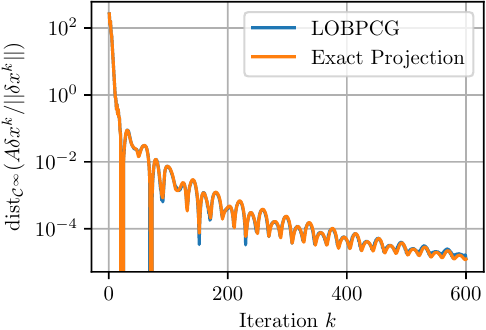}
	\end{minipage}
	\centering
	\begin{minipage}{.49\textwidth}
	\includegraphics[width=.99\textwidth]{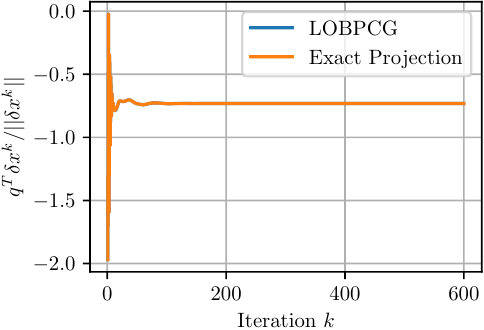}
	\end{minipage}
  \caption{Convergence of $\left(\delta x^k/\norm{\delta x^k} \right)_{k \in \mathbb{N}}$ to a certificate of dual infeasibility for problem \texttt{infd1} from the SDPLIB. A fixed value of $\rho = 10^{-3}$ is used in Algorithm \ref{admm:alg:admm_approximate}.}
  \label{admm:fig:dual_infeasibility}
\end{figure}

To demonstrate the detection of primal infeasibility we consider the dual of \texttt{infd1}. Following \cite[\S 5.2]{osqpinfeasibility},
\texttt{COSMO} detects primal infeasibility in conic problems when the certificate \eqref{admm:eqn:primal_infeasibility} is satisfied approximately, that is when $\delta y^k \neq 0$ and
\begin{equation*}
  \norm{P \bar y^k} < \epsilon_\text{pinf}, \quad
  \norm{A^T \bar y^k} < \epsilon_\text{pinf}, \quad \text{dist}_{{C}^{\circ}}\left(\bar y^k \right) < \epsilon_\text{pinf}, \quad b^T \bar y^k < \epsilon_\text{pinf},
\end{equation*}
where $\bar y^k \eqdef \delta y^k / || \delta y^k||$, for a positive tolerance $\epsilon_\text{pinf}$. Note that, for the case of the dual of \texttt{infd1}, the first condition is trivial since $P = 0$. Figure \ref{admm:fig:primal_infeasibility} compares the convergence of our approach, against standard COSMO, to a certificate of infeasibility. LOBPCG yields practically identical convergence as the exact projection for all of the quantities except $\norm{A^T \bar y^k}$, where slower convergence is observed.
\begin{figure}
	\centering
	\begin{minipage}{.49\textwidth}
	\includegraphics[width=.99\textwidth]{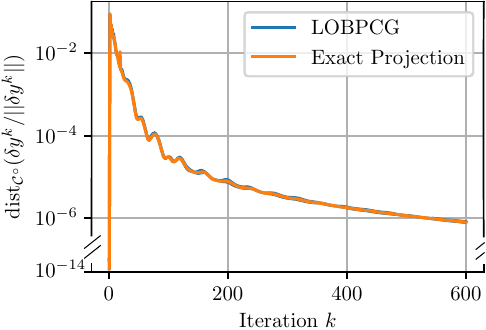}
	\end{minipage}
	\centering
	\begin{minipage}{.49\textwidth}
	\includegraphics[width=.99\textwidth]{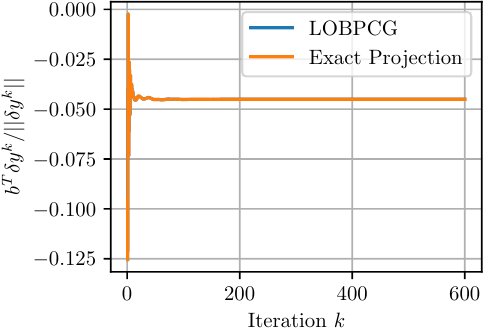}
  \end{minipage}
  \begin{minipage}{.49\textwidth}
    \includegraphics[width=.99\textwidth]{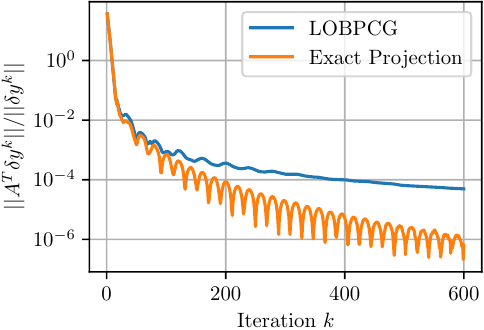}
    \end{minipage}
  \caption{Convergence of $\left(\delta y^k / \norm{\delta y^k} \right)_{k \in \mathbb{N}}$ to a certificate of primal infeasibility for the dual of the problem \texttt{infd1} from the SDPLIB. A fixed value of $\rho = 10^{3}$ is used in Algorithm \ref{admm:alg:admm_approximate}.}
  \label{admm:fig:primal_infeasibility}
\end{figure}

Note that SDPLIB also contains two instances of primal infeasible problems: \texttt{infp1} and \texttt{infp2}. However, in these problems, there is a single positive semidefinite variable of size $30 \time 30$ and, in ADMM, the matrices projected to the semidefinite cone have rank $15 = 30/2$ across all the iterations (except for the very first few). Thus, according to \S\ref{admm:subsec:lobpcg} LOBPCG yields identical results to the exact projection, hence a comparison would be of little value.

\section{Conclusions}
We have shown that state-of-the art approximate eigensolvers can bring significant speedups to ADMM for the case of Semidefinite Programming. We have extended the results of \cite{osqpinfeasibility} to show that infeasibility can be detected even in the presence of appropriately controlled projection errors, thus ensuring the same overall asymptotic behavior as an exact ADMM method. Future research directions include exploring the performance of other state-of-the-art eigensolvers from the Linear Algebra community \cite{Stathopoulos2010}.
\section*{Acknowledgements} This work was supported by the EPSRC AIMS CDT grant EP/L015987/1 and Schlumberger. We would also like to acknowledge the support of the NII International Internship Program, which funded the first author's visit to NII, during which this collaboration was launched.

\appendix
\section{Convergence of approximate iterations of nonexpansive operators} \label{admm:app:delta}
In this section we provide a proof for Theorem \ref{admm:thm:delta_approx}. We achieve this by generalizing some of the results of \cite{Pazy1971}, \cite{Bailion1978} \newcontent{and \cite{Ishikawa1976}} to account for sequences generated by approximate evaluation of \newcontent{averaged} operators $T$ for which $\cl{\mathcal{R}(\Id - T)}$ has the \emph{minimum property} defined below:
\begin{definition}[Minimum Property]
  Let $K \subseteq \mathcal{H}$ be closed and let $\ell$ be the \newcontent{minimum-norm} element of $\cl{\conv K}$. The set $K$ has the minimum property if $\ell \in K$.
\end{definition}
Note that $\cl{\mathcal{R}(\Id - T)}$ has the minimum property when $T$ is defined as \eqref{admm:eqn:admm_operator} because the domain of \eqref{admm:eqn:admm_operator} is convex \cite[Lemma 5]{Pazy1971}. Thus Theorem \ref{admm:thm:delta_approx} follows from the following result:
\begin{proposition} \label{admm:prop:delta_approx}
  Consider some $\mathcal{D} \subseteq \mathcal{H}$ that is closed, an \newcontent{averaged} $T : \mathcal{D} \rightarrow \mathcal{D}$ and assume that $\cl{\mathcal{R}(\Id - T)}$ has the minimum property. For any sequence defined as 
  $$(\forall k \in \mathbb{N}) \quad x^{k + 1} \approx_{\epsilon^k} T x^k,$$
  for some $x^0 \in \mathcal{D}$ and a summable nonnegative sequence $(\epsilon^k)_{k \in \mathbb{N}}$, we have $$\lim_{k \to \infty}(x^{k + 1} - x^k) = \lim_{k \to \infty} {x^k}/{k} = -\ell,$$ where $\ell$ is the unique element of minimum norm in $\cl{\mathcal{R}(\Id - T)}$.
\end{proposition}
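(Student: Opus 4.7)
The plan is to reduce Proposition \ref{admm:prop:delta_approx} to the known exact case of \citep{Pazy1971} and \citep{Bailion1978} by exploiting summability of the errors to show that, asymptotically, the approximate iterates are indistinguishable from suitably chosen exact iterates. Concretely, for each starting index $m \geq 0$ I would introduce the exact tail sequence $y^{(m)}_m \eqdef x^m$, $y^{(m)}_{k+1} \eqdef T y^{(m)}_k$ for $k \geq m$. A straightforward induction using the triangle inequality and the nonexpansiveness of $T$ yields
\[
  \|x^k - y^{(m)}_k\| \;\leq\; \sum_{j=m}^{k-1} \epsilon^j \;\leq\; \tau_m,\qquad \tau_m \eqdef \sum_{j=m}^{\infty}\epsilon^j,
\]
where $\tau_m < \infty$ by summability and $\tau_m \downarrow 0$ as $m\to\infty$.

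For the Cesàro limit, specialising to $m=0$ gives $\|x^k - y^{(0)}_k\|/k \to 0$. Pazy's theorem applied to the exact iteration $(y^{(0)}_k)$ under the minimum property yields $y^{(0)}_k/k \to -\ell$, so $x^k/k \to -\ell$ as well. For the successive differences, I would apply the exact result again to each tail sequence $(y^{(m)}_k)_{k \geq m}$ to obtain $y^{(m)}_{k+1} - y^{(m)}_k \to -\ell$ as $k \to \infty$. The bound above then gives, for every fixed $m$,
\[
  \|(x^{k+1} - x^k) - (y^{(m)}_{k+1} - y^{(m)}_k)\| \;\leq\; \|x^{k+1} - y^{(m)}_{k+1}\| + \|x^k - y^{(m)}_k\| \;\leq\; 2\tau_m,
\]
so $\limsup_k \|(x^{k+1} - x^k) + \ell\| \leq 2\tau_m$, and letting $m\to\infty$ closes the proof.

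The main obstacle is the step requiring \emph{strong} convergence of $y^{(m)}_{k+1} - y^{(m)}_k$ to $-\ell$, which is what transfers cleanly through the $2\tau_m$ bound. I expect to assemble this from two ingredients available under the hypotheses: weak convergence follows from \citep{Pazy1971,Bailion1978} combined with the minimum property, while convergence of norms follows because (i) $\|y^{(m)}_{k+1} - y^{(m)}_k\|$ is monotonically nonincreasing by nonexpansiveness of $T$, (ii) each difference lies in $-\mathcal{R}(Id-T)$, so its norm is bounded below by $\|\ell\|$, and (iii) the telescoping identity $\tfrac{1}{N-m}\sum_{k=m}^{N-1}(y^{(m)}_{k+1}-y^{(m)}_k) = (y^{(m)}_N - x^m)/(N-m) \to -\ell$ combined with Jensen's inequality pins the norm limit at $\|\ell\|$. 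Weak convergence together with convergence of norms then yields strong convergence in the Hilbert space $\mathcal{H}$, which is exactly what the perturbation bound requires.
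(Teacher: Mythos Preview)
Your reduction via the exact tail sequences $(y^{(m)}_k)_{k\ge m}$ is a legitimate route, and the perturbation estimate $\|x^k - y^{(m)}_k\|\le\tau_m$ together with $\limsup_k\|(x^{k+1}-x^k)+\ell\|\le 2\tau_m\to 0$ would indeed finish the proof once you have \emph{strong} convergence $y^{(m)}_{k+1}-y^{(m)}_k\to-\ell$ for the exact iteration. The gap is in how you justify that strong convergence. Items (i) and (ii) correctly show that $\|y^{(m)}_{k+1}-y^{(m)}_k\|$ decreases to some limit $L\ge\|\ell\|$. But item (iii) yields the \emph{same} inequality, not the reverse: convexity of the norm applied to the telescoping Ces\`aro mean gives
\[
\|\ell\|=\lim_N\Bigl\|\tfrac{1}{N-m}\sum_{k=m}^{N-1}\bigl(y^{(m)}_{k+1}-y^{(m)}_k\bigr)\Bigr\|\le \lim_N\tfrac{1}{N-m}\sum_{k=m}^{N-1}\bigl\|y^{(m)}_{k+1}-y^{(m)}_k\bigr\|=L,
\]
so once more $L\ge\|\ell\|$. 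Nothing in (i)--(iii) produces $L\le\|\ell\|$, hence the norm limit is not ``pinned'' and the weak-plus-norms upgrade does not fire: weak convergence $a_k\rightharpoonup-\ell$ together with $\|a_k\|\to L$ only gives $\|a_k+\ell\|^2\to L^2-\|\ell\|^2$, which vanishes iff $L=\|\ell\|$.

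By contrast, the paper does not pass through the exact iteration for the differences. It works directly with the perturbed sequence: first it proves that $\lim_k\|x^{k+1}-x^k\|$ exists by introducing the monotone auxiliary sequence $s^k=\|x^{k+1}-x^k\|+\sum_{i\ge k}\|\delta^{i+1}-\delta^i\|$; then it closes a sandwich $\lim_k\|x^{k+1}-x^k\|\le\|\ell\|\le\lim_k\|x^{k+1}-x^k\|$, the nontrivial upper bound being drawn from \citep[Theorem~2.1]{Bailion1978}; and finally it converts norm convergence into strong convergence via the Hilbert-space projection inequality $\|z-\ell\|^2\le\|z\|^2-\|\ell\|^2$ for $z\in\cl{\conv{\mathcal{R}(Id-T)}}$ (\citep[Lemma~2]{Pazy1971}). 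The cleanest repair of your argument is simply to cite the exact-iterate strong convergence from \citep{Bailion1978} as a black box rather than re-derive it, since that reference is precisely where the missing inequality $L\le\|\ell\|$ is established.
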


\newcontent{
To prove Proposition \ref{admm:prop:delta_approx} we will need the following Lemma:
\begin{lemma} \label{admm:lem:ishikawa}
For any approximate iteration over an averaged operator, defined as:
$$x^{k + 1} = (1 - t) x^k + t \tilde T x^k + \delta^k,$$
where $x_0 \in \mathcal{D} \subseteq \mathcal{H}$, $\tilde T : \mathcal{D} \mapsto \mathcal{D}$ is a nonexpansive operator, $t \in [0, 1)$ and $(\norm{\delta^k})_{k \in \mathbb{N}}$ is some real, summable sequence representing approximation errors in the iteration, we have:
\begin{equation} \label{admm:eqn:main}
  \frac{1}{N}\lim_{k \to \infty}\norm{x^{k + N}- x^{k}} = \lim_{k \to \infty} \norm{x^{k+1} - x^k}.
\end{equation}
\end{lemma}
\begin{proof}
When $(\norm{\delta^k})_{k \in \mathbb{N}}$ is zero, the proof for \eqref{admm:eqn:main} is mentioned as ``a straightforward modification of Ishikawa's argument'' in \cite[Proof of Theorem 2.1]{Bailion1978}; we show it in detail and for any summable $(\norm{\delta^k})$, indeed by modifying \cite[Proof of Lemma 2]{Ishikawa1976}.

We will first show that $\lim_{k \to \infty} \norm{x^{k + 1} - x^k}$ and $\lim_{k \to \infty} \norm{x^{k + N} - x^k}$ exist and are bounded. To this end, consider the sequence
$$
q^k \eqdef \norm{
  x^{k + 1} -
  x^k} + \sum_{i = k}^{\infty}\norm{\delta^{i} - \delta^{i - 1}}.
$$
Note that
$\sum_{i = k}^{\infty}\norm{\delta^{i} - \delta^{i - 1}}$
converges to a finite value for every $k$, as it is the limit $n \to \infty$ of the nondecreasing sequence ($\sum_{i = k}^{n}\norm{\delta^{i} - \delta^{i - 1}}$) that is bounded above because $(\norm{\delta^i})$ is summable. Since $\norm{x^k - x^{k - 1}} \leq \norm{x^{k + 1} - x^k}  + \norm{\delta^{k} - \delta^{k - 1}}$ we conclude that $(q^k)$ is nonincreasing. Since $(q^k)$ is also bounded below by zero we conclude that $\lim_{k \to \infty}q^k$ exists and is bounded. Finally, because
$\lim_{k \to \infty}\sum_{i = k}^{\infty}\norm{\delta^{i} - \delta^{i - 1}} = 0,$
we conclude that $\lim_{k \to \infty}\norm{x^{k + 1} - x^k}$ also exists and is bounded.  Using similar arguments, we can show the same for $\lim_{k \to \infty} \norm{x^{k + N} - x^k}$.

Since $x^{k + 1} - x^{k} = t(\tilde Tx^k - x^k) + \delta^k$ $\forall k \in \mathbb{N}$, it suffices to show 
\begin{equation} \label{eqn:equivalent}
  \frac{1}{N}\lim_{k \to \infty}\norm{x^{k + N}- x^{k}} = t{\lim_{k \to \infty}\norm{x^k - \tilde Tx^k}}
\end{equation}
instead of \eqref{admm:eqn:main}, because $\norm{\delta^k} \to 0$. Note that both limits in the above equation exist and are bounded, as discussed above.

We will show \eqref{eqn:equivalent} by showing that 
\begin{equation*}
  t r \leq
  \frac{1}{N}\lim_{k \to \infty}\norm{x^{k + N + 1}- x^{k + 1}} \leq t r.
\end{equation*}
where $r \eqdef \norm{x^k - \tilde Tx^k}$.

The proof for both of these bounds depends on the following equality:
\begin{align} \label{eqn:sum}
  \begin{split}
  x^{k + N + 1} - x^{k + 1} &= \sum_{i=1}^N [(1-t) x^{k + i} + t \tilde T x^{k + i} + \delta^{k + i}] - x^{k + i }\\ &= \sum_{i=1}^N t (\tilde T x^{k+i} - x^{k+i}) + \delta^{k + i}.
  \end{split}
\end{align}

The upper bound follows easily from the triangular inequality:
\begin{align}
  \norm{x^{k + N + 1} - x^{k + 1}} &\leq \sum_{i=1}^N \left( t \norm{\tilde T x^{k+i} - x^{k+i}} + \norm{\delta^{k + i}} \right) \Rightarrow \\
  \label{upper_bound}
  \lim_{k \to \infty} \frac{1}{N} \norm{x^{k + N + 1} - x^{k + 1}} &\leq t \lim_{k \to \infty} \norm{x^k - \tilde Tx^k},
\end{align}
since $\norm{\delta^{k + i}} \to 0$.

To get the lower bound, define $u^i \eqdef \tilde Tx^i - x^i$ and $s \eqdef 1 - t$, and note that for all $i \in \mathbb{N}$: \begin{align}
  \label{eqn:weird_difference}
  \begin{split}
    &\norm{u^{i + 1} - s u^{i}} = \norm{\tilde T x^{i + 1} - x^{i + 1} - (1 - t)(\tilde T x^i - x^i)}\\
    &\leq \norm{\tilde T x^{i + 1} - ((1-t) x^i + t \tilde T x^i)) - (1 - t)(\tilde T x^i - x^i)} + \norm{\delta_i} \\
    &= \norm{\tilde T ((1-t) x^i + t \tilde T x^i + \delta^i) - \tilde T x^i} + \norm{\delta^i} \\
    &\leq \norm{((1-t) x^i + t \tilde T x^i ) - x^i} + 2\norm{\delta^i}\\
    &= t\norm{\tilde Tx^i - x^i} + 2\norm{\delta^i} \\
    &= (1-s) \norm{u^i} + 2\norm{\delta^i}.
  \end{split}
\end{align}
Thus, using \eqref{eqn:sum}, and since $\norm{\delta^{k +i}} \to 0$ we have
\begin{align*}
  &\lim_{k \to \infty} s^{N - 1} \norm{x^{k + N + 1} - x^{k + 1}} = \lim_{k \to \infty }\norm{s^{N-1} \sum_{i=1}^N (1-s)u^{k + i}} \\
  \geq &\limsup_{k \to \infty} \left[(1 - s^N) \norm{u^{k + N}} - \sum_{i=1}^{N-1} s^{N-1-i} (1 - s^{i}) \norm{u^{k + i + 1} - s u^{k + i}} \right], \\
  \intertext{where \cite[Lemma 1]{Ishikawa1976} was used above,}
  \geq &(1 - s^N) \liminf_{k \to \infty}\norm{u^{k+N}} - \sum_{i=1}^{N-1} s^{N-1-i} (1 - s^{i}) \liminf_{k \to \infty} \norm{u^{k + i + 1} - s u^{k + i}} \\
  \geq &(1 - s^N) r - \sum_{i=1}^{N-1} s^{N-1-i} (1 - s^{i}) (1-s) r, \\
  \intertext{because of \eqref{eqn:weird_difference} and because $2\norm{\delta^i} \to 0$,}
  = & \left[1 - s^N  - \sum_{i=1}^{N-1} s^{N-1-i} (1 - s^{i}) (1-s)\right] r 
  = r s^{N - 1} \sum_{i=1}^{N} (1 - s).
\end{align*}
where \cite[Lemma 1]{Ishikawa1976} was used in the last equality above.
Thus $$\lim_{k \to \infty} s^{N-1} \norm{x^{k + N + 1} - x^{k + 1}} \geq r s^{N - 1} \sum_{i=1}^{N} (1 - s)$$
or, since $s > 0$, we get the desired lower bound that concludes the proof
\begin{equation*}
\frac{1}{N}\lim_{k \to \infty}\norm{x^{k + N + 1}- x^{k + 1}} \geq t \lim_{k \to \infty} \norm{x^k - \tilde Tx^k}.
\end{equation*}
\end{proof}

We can now proceed with the Proof of Proposition \ref{admm:prop:delta_approx}.}
We first show $\lim_{k \to \infty} {x^k}/{k} = -\ell$. The nonexpansiveness of $T$ gives
\begin{align*}
  \norm{x^n - T^nx^0} &\leq \norm{Tx^{n - 1} - TT^{n-1}x^0} + \epsilon_n, \quad \forall n \in \mathbb{N} \\
  \Rightarrow \frac{1}{n}\norm{x^n - T^nx^0} &\leq \frac{1}{n}\sum_{i = 1}^{n}\epsilon_i \Rightarrow \lim_{n \to \infty}\norm{\frac{x^n}{n} - \frac{T^nx^0}{n}} = 0,
\end{align*}
where the summability of $(\epsilon_i)_{i \in \mathbb{N}}$ was used in the last implication. Thus, the claim follows from \cite[Theorem 2]{Pazy1971}.

It remains to show that $\lim_{k \to \infty}x^{k + 1} - x^k$ also converges to $-\ell$.
\newcontent{To this end, note that due to Lemma \ref{admm:lem:ishikawa}, we have $\lim_{k \to \infty}\norm{x^{k + N} - x^k}/N = \lim_{k \to \infty}\norm{x^{k + 1} - x^k}$.
Furthermore the non-expansiveness of $T$ gives for all $N \geq 1$:
\begin{align*}
 \lim_{k \to \infty}\norm{x^{k + N} - x^k}/N \leq \frac{1}{N}\left(\norm{x^{N} - x^0} + 2\sum_{i=0}^{\infty}\epsilon^{i}\right).
\end{align*}
Noting also that $\displaystyle \lim_{N \to \infty}\norm{x^N - x^0}/N$ exists due to the first part of the proof, we get:
\begin{align}
  \begin{split}
\lim_{k \to \infty} \norm{x^{k + 1} - x^k} &= \lim_{k \to \infty}\norm{x^{k + N} - x^k}/N \; \; \forall N \geq 1 \\
&\leq \lim_{N \to \infty}\frac{1}{N}\norm{x^N - x^0} \leq \limsup_{N \to \infty} \frac{1}{N}\sum_{i=0}^{N-1} \norm{x^{i + 1} - x^i} \\
&= \lim_{k \to \infty} \norm{x^{k + 1} - x^k},
  \end{split}
\end{align}
where the above chain of equations follows \cite[Proof of Theorem 2.1]{Bailion1978} and the properties of the Ces\`aro summation for the last equation.

Hence,
$\lim_{k \to \infty} \norm{x^{k + 1} - x^k} = \lim_{k \to \infty}\norm{x^k - x^0}/k$, which is equal to $\norm{\ell}$, as we have shown in the first part of this proof. As a result, we also have $\lim_{k \to \infty} \norm{T x^k - x^k} = \norm{\ell}$ because $\epsilon^k \to 0$. 
We conclude that $\lim_{k \to \infty} T x^k - x^k = -\ell$ due to \cite[Lemma 2]{Pazy1971}. The desired $\lim_{k \to \infty}x^{k + 1} - x^k = -\ell$ then follows because $\epsilon^k \to 0$.
}
\newline

\newcontent{\noindent\rule{\textwidth}{0.3pt}}
\newcontent{We conclude the paper by providing} detailed results for the SDPLIB problems \newcontent{of} \S\ref{admm:subsec:sdplib}.
\newcommand{\tableheadersdplib}{{Name} & {$n_{\max}$} &  {rank} & {$t^{\text{exact}}$}    & {Speedup}           & {$t_{\text{proj}}^{\text{exact}}$} & {$\text{Speedup}_{\text{proj}}$}  & {$\text{Iter}^{\text{exact}}$} & {Iter} & {$f^{\text{exact}}$}            & {$f$}           & {$f^*$} \\}
\begin{singlespace}
\begin{landscape}
  {\sisetup{round-mode=places, scientific-notation=true}
  {\small\tabcolsep=3pt
  \footnotesize
	\begin{longtable}{
		l
  *{2}{S[scientific-notation=false, round-precision=0, table-number-alignment=center]}
  *{1}{S[round-precision=2, table-format=1.2e-2]}
  *{1}{S[scientific-notation=false, round-precision=2, table-number-alignment=center]}
  *{1}{S[round-precision=2, table-format=1.2e-2]}
  *{1}{S[scientific-notation=false, round-precision=2]}
  *{2}{S[scientific-notation=false, round-precision=0, table-number-alignment=center, table-column-width=1.3cm]}
  *{3}{S[round-precision=2, table-format=1.2e-2]}
  }
  \captionsetup{width=1.5\textwidth}
  \caption{\footnotesize{Results for the SDPLIB collection (\S 6.1). $n_{\max}$ denotes the dimensions of the largest semidefinite variable at each problem, ``rank'' the maximum number of computed ritz pairs by LOBPCG in the last iteration of Algorithm \ref{admm:alg:lobpcg}, while $t^{\text{exact}}, t^{\text{exact}}_{\text{proj}}, \text{Iter}^{\text{exact}}, f^{\text{exact}}$ the solution time (in seconds), the time spent in projecting to the semidefinite cone, the ADMM iterations and the resulting objective when computing the projections exactly. $\text{Iter}$ and $f$ denote identical metrics when computing projections approximately. ``Speedup'' and ``$\text{Speedup}_{\text{proj}}$'' denote the speedups achieved for the full ADMM algorithm and only the projection part when using LOBPCG, and $f^*$ the optimal objective of each problem. Hardware used: Intel Gold 5120 with 192GB of memory.}}\label{admm:tab:sdplib}\\
  \toprule \tableheadersdplib \midrule \endfirsthead
  \caption*{Table \ref{admm:tab:sdplib}: Continued.}\\
	\toprule \tableheadersdplib \midrule \endhead
  \csvreader[head to column names]{table_csv_data.tex}{}
  {\\
  \ProblemName	& \MaxPsdDim & \Rank & \Time	& \Speedup	& \TimePrj & \SpeedupPrj	& \Iter	& \IterLOBPCG	& \Objective & \ObjectiveLOBPCG & \OptimalObjective
  }
  \\\hline
  \end{longtable}}
  }
\end{landscape}
\end{singlespace}

\end{document}